\documentclass[12pt]{amsart}

 \usepackage{amsfonts}
\usepackage{amssymb}
 \usepackage{amsthm}

\newtheorem{theorem}{Theorem}[section]
\newtheorem{lemma}[theorem]{Lemma}

\newtheorem{problem}[theorem]{Problem}
\newtheorem{conjecture}[theorem]{Conjecture}
\theoremstyle{definition}

\theoremstyle{remark}

\numberwithin{equation}{section}

\begin{document}

\title[Coverings of commutators in profinite groups]{Coverings of commutators in profinite groups}
\author{Cristina Acciarri}

\address{Cristina Acciarri:  Department of Mathematics, University of Brasilia,
Brasilia-DF, 70910-900 Brazil}
\email{acciarricristina@yahoo.it}

\author{Pavel Shumyatsky} 

\address{Pavel Shumyatsky: Department of Mathematics, University of Brasilia,
Brasilia-DF, 70910-900 Brazil}

\email{pavel@unb.br}

\keywords{Profinite groups, covering subgroups, commutators, verbal subgroups}
\subjclass[2010]{20E18; 20F14}

\thanks{This work was supported by  FAPDF and the Conselho Nacional de Desenvolvimento
Cient\'ifico e Tecnol\'ogico (CNPq), Brazil}

\begin{abstract}  Let $w$ be a group-word. Suppose that the set of all $w$-values in a profinite group $G$ is contained in a union of countably many subgroups. It is natural to ask in what way the structure of the verbal subgroup $w(G)$ depends on the properties of the covering subgroups. The present article is a survey of recent results related to that question. In particular we survey results on finite and countable coverings of word-values (mostly commutators) by procyclic, abelian, nilpotent, and soluble subgroups, as well as subgroups with finiteness conditions. The last section of the paper is devoted to relation of the described results with Hall's problem on conciseness of group-words.

\end{abstract}

\maketitle
%%%%%%%%%%%%%%%%%%%%%%%%%  INTRO  %%%%%%%%%%%%%%%%%%%%%%%%%%%%

\section{Introduction}
A  covering of a group $G$ is a family $\{S_i\}_{i\in I}$ of subsets of $G$ such that $G=\bigcup_{i\in I}\,S_i$. When a group $G$ is covered by finitely many subgroups, it is natural to expect that some structural information about $G$ can be deduced from the properties of the covering subgroups. In the literature there are many results in this direction. An important combinatorial tool for dealing with problems of that kind is B.H. Neumann's Lemma \cite{neumann}: if $\{S_i\}$ is a finite covering of $G$ by cosets of subgroups, then $G$ is actually covered by the cosets $S_i$ corresponding to subgroups of finite index in $G$. In other words, we can get rid of the cosets of subgroups of infinite index without losing the covering property.

In recent years some ``verbal" variations of the above questions became a subject of research activity. Given a group-word $w=w(x_1,\dots,x_n)$, we think of it primarily as a function of $n$ variables defined on any given group $G$. We denote by $w(G)$ the verbal subgroup of $G$ generated by the values of $w$. When the set of all $w$-values in a group $G$ is contained in a union of finitely many subgroups, it is natural to ask whether the structure of the verbal subgroup $w(G)$ depends on the properties of the covering subgroups.

The questions become even more intriguing when the groups are profinite. In that case the interplay of algebraic and topological considerations adds a new quality to the obtained results.  A profinite group is a topological group that is isomorphic to an inverse limit of finite groups. The textbooks \cite{riza} and \cite{wils} provide a good introduction to the theory of profinite groups. In the context of profinite groups all the usual concepts of group theory are interpreted topologically. In particular, by a subgroup of a profinite group we mean a closed subgroup. A subgroup is said to be generated by a set $S$ if it is topologically generated by $S$. Thus, the verbal subgroup $w(G)$ in a profinite group $G$ is a minimal closed subgroup containing the set of $w$-values. One important tool for dealing with  the ``covering" problems in profinite groups is the classical Baire's category theorem (cf \cite[p.\ 200]{kell}): If a locally compact Hausdorff space is a union of countably many closed subsets, then at least one of the subsets has non-empty interior. It follows that if a profinite group is covered by countably many subgroups, then at least one of the subgroups is open. Thus,  in the case of profinite groups we have tools to deal with problems on countable coverings. Remark however that the Baire category theorem does not provide a full countable analogue of the Neumann lemma. A profinite group that is covered by countably many subgroups $\{H_i\}$ is not necessarily covered by open ones from the family $\{H_i\}$. For example, let $C_p$ denotes the cyclic group of prime order $p$ and $\mathbb{Z}_p$ the additive group of $p$-adic integers. Then $C_p\times\mathbb{Z}_p$ is a profinite group covered by countably many procyclic subgroups but of course $C_p\times\mathbb{Z}_p$ cannot be covered by open procyclic subgroups.

The present article is a survey of recent results on finite and countable coverings of word-values (mostly commutators) in profinite groups. Of course, finite coverings of abstract groups are also mentioned here since in many cases they serve as a starting point for the study of coverings in profinite groups. On the other hand, it must be said that several important lines of research on the relationship between $G_w$ and $w(G)$ are not featured in this paper. In particular, the reader will not find here any discussion of results on the verbal width (cf \cite{segalbook}) and problems of Waring type (cf \cite{waring}).

In Section 2 we describe known results and open problems on profinite groups in which commutators are covered by countably many procyclic subgroups. Section 3 is devoted to the case where the covering subgroups are abelian, nilpotent, or soluble.  Section 4 provides an account on coverings of word-values by subgroups with classical finiteness conditions -- in particular by periodic subgroups, or subgroups of finite rank. In the last section we discuss a recently discovered phenomenon closely related to the famous Hall problem whether the verbal subgroup $w(G)$ must be finite whenever the word $w$ has only finitely many values in a group $G$ (the problem was solved in the negative by Ivanov \cite{Ivan}). We describe results showing that in some cases $w(G)$ must be finite whenever the word $w$ has only countably many values in a profinite group $G$. The paper contains many open problems and conjectures related to the topic of covering of word-values by subgroups.

We thank the unknown referee for a number of useful suggestions on the earlier version of the article.

%%%%%%%%%%%%%%%%%%%%  NOTATION   %%%%%%%%%%%%%%%%%%%%%%%%%%%%%%
%%%%%%%%%%%%%%%%%%%%%%%%%%%%%%%%%%%%%%%%%%%%%%%%%%%%%%%%%
\subsection*{Notation} 
Given a profinite group $G$, we denote by $\pi(G)$ the set of prime divisors of the orders of finite continuous homomorphic images of $G$.  We say that $G$ is a $\pi$-group if $\pi(G)\subseteq\pi$ and $G$ is a $\pi'$-group if $\pi(G)\cap\pi=\emptyset$. If $m$ is an integer, then we denote by $\pi(m)$ the set of prime divisors of $m$. If $\pi$ is a set of primes, we denote by $O_\pi(G)$ the maximal normal $\pi$-subgroup of $G$ and by $O_{\pi'}(G)$ the maximal normal $\pi'$-subgroup.
 
As usual, the expression ``$(a,b,c,\ldots)$-bounded'' means ``bound\-ed from above by some function which depends only on the parameters $a,b,c,\ldots$''.
 
A word $w=w(x_{1},\ldots,x_{n})$ is said to be non-commutator if the sum of exponents of at least one variable $x_{i}$ involved in $w$ is non-zero. Given two arbitrary elements $x$ and $y$ of a group $G$, their commutator is defined as the element $[x,y]=x^{-1}y^{-1}xy$. The verbal subgroup of $G$ corresponding to the word $[x_{1},x_{2}]$ is the commutator subgroup $G'$. The lower central words $\gamma_k$, are defined recursively  by
\[
\gamma_1=x_1,
\qquad
\gamma_k=[\gamma_{k-1},x_k]=[x_1,\ldots,x_k],
\quad
\text{for $k\ge 2$.}
\]
The corresponding verbal subgroups $\gamma_k(G)$ are the terms of the lower central series of $G$.  Another family of words that generalize the simple commutator is that of the derived words $\delta_k$, on $2^k$ variables, which are defined recursively by 
\[
\delta_0=x_1,
\quad
\delta_k=[\delta_{k-1}(x_1,\ldots,x_{2^{k-1}}),\delta_{k-1}(x_{2^{k-1}+1},\ldots,x_{2^k})],\quad
\text{for $k\ge 1$.}
\] 
The verbal subgroup that corresponds to the word $\delta_k$ is the familiar $k$th derived subgroup of $G$ usually denoted by $G^{(k)}$.

Many results described in this article deal with so called {\it multilinear commutators}, also known under the name of {\it outer commutator words}.  These are the words that have a form of a multilinear Lie monomial, i.e., the words that are constructed by nesting commutators. The number of indeterminates used in the expression of a multilinear commutator word $w$ is called the weight of $w$.  For instance the word  $$[[x_1,x_2],[y_1,y_2,y_3],z]$$ is a multilinear commutator of weight six. Other examples of multilinear commutators are provided by the aforementioned lower central words $\gamma_k$ of weight $k$ and  derived words $\delta_k$ of weight $2^{k}$. The Engel word $[x_1,x_2,x_2,x_2]$ is an example of a commutator word which is not a multilinear commutator. 

%%%%%%%%%%%%%%%%%%%%%%%%%%%%%%%%%%%%%%%%%%%%%%%%%%%%%%%%%
%%%%%%%%%%%%%%%%%%%%%     PROCYCLIC   %%%%%%%%%%%%%%%%%%%%%%%%%%%%%%%%%%%%%%%%%%%%%%%%%%%%%%%%%%%%%%%%%%%%%%%%%%%%%%%%%%%%%%%% 
\section{Coverings by procyclic subgroups} 
\subsection*{On groups covered by few cyclic subgroups}

It was pointed out by Baer (cf \cite[p.\ 105]{Robinson}) that an abstract group covered by finitely many cyclic subgroups is either cyclic or finite. Somewhat surprizingly, it seems that until recently there was no published work on finite groups covered by few cyclic subgroups. That gap was filled in \cite{AS3}. In particular, the article \cite{AS3} contains the following results.

1. Let $G$ be a finite noncyclic $p$-group covered by $m$ cyclic subgroups. Then the order of $G$ is $m$-bounded.

2. Let $G$ be a finite group covered by $m$ cyclic subgroups. Then $G$ has a normal subgroup $M$ of $m$-bounded order with the property that $G/M$ is cyclic.

Combining the above results it is not difficult to deduce the following theorem.
\begin{theorem}\label{1} If a finite group $G$ is covered by $m$ cyclic subgroups, then $G$ is a product of a normal subgroup $M$ of $m$-bounded order and a cyclic subgroup $H$ such that $(|M|,|H|)=1$. Conversely, assume that a finite group $G$ is a product of a normal subgroup $M$ and a cyclic subgroup $H$ such that $(|M|,|H|)=1$. Then $G$ is covered by $|M|$-boundedly many cyclic subgroups.
\end{theorem}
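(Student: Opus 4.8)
The plan is to derive the two directions separately, using the two quoted results from \cite{AS3} for the forward implication and an elementary counting argument for the converse. For the forward direction, suppose $G$ is covered by $m$ cyclic subgroups. By result 2 above, $G$ has a normal subgroup $N$ of $m$-bounded order with $G/N$ cyclic. The obstacle is that $N$ need not be a direct factor, need not be a Hall subgroup, and the complement to $N/N$-part need not be cyclic as a subgroup of $G$; so one cannot simply take $M=N$. To fix this, I would first split $N$ into its Hall $\pi$-part and $\pi'$-part, where $\pi=\pi(|N|)$. Since $|N|$ is $m$-bounded, $\pi$ is a finite set of $m$-boundedly many primes, and $O_\pi(G)\supseteq N$ will be the candidate for $M$ after one checks it still has $m$-bounded order: this is where I expect to spend the most effort. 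The point is that the Sylow subgroups of $G$ for primes in $\pi$ are themselves covered by at most $m$ cyclic subgroups, hence (being noncyclic would force $m$-bounded order by result 1, and being cyclic is even better) each such Sylow subgroup has $m$-bounded order; multiplying over the $m$-boundedly many primes in $\pi$ shows $|O_\pi(G)|$ is $m$-bounded. Set $M=O_\pi(G)$.

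It remains to produce the cyclic Hall $\pi'$-complement $H$ with $G=MH$ and $(|M|,|H|)=1$. Since $G/N$ is cyclic and $M/N$ is a normal $\pi$-subgroup of the cyclic group $G/N$, the quotient $G/M$ is cyclic; write $G/M=\langle \bar g\rangle$. Decompose $\bar g=\bar a\bar b$ with $\bar a$ a $\pi$-element and $\bar b$ a $\pi'$-element of the cyclic group $G/M$, commuting, with $\langle\bar a\rangle\langle\bar b\rangle=G/M$. But $M$ absorbs the $\pi$-part: I claim $\bar a$ already lies in the image of $O_\pi(G)$-part so that in fact $G/M$ is a $\pi'$-group, equivalently $M$ is a normal Hall $\pi$-subgroup of $G$. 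Indeed $M=O_\pi(G)$ is the largest normal $\pi$-subgroup, and if $G/M$ had nontrivial $\pi$-part the preimage of its $\pi$-part would be a normal $\pi$-subgroup strictly larger than $M$, a contradiction. So $M$ is a normal Hall $\pi$-subgroup of the finite group $G$; by the Schur–Zassenhaus theorem it has a complement $K$, and $K\cong G/M$ is cyclic. Taking $H=K$ gives $G=MH$, $H$ cyclic, and $(|M|,|H|)=(|M|,|G/M|)=1$, completing the forward direction.

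For the converse, assume $G=MH$ with $M\trianglelefteq G$, $H$ cyclic, and $(|M|,|H|)=1$. Every element $g\in G$ factors uniquely as $g=mh$ with $m\in M$ and $h\in H$ (coprimality makes this product decomposition a set bijection $M\times H\to G$). Fix $m\in M$; as $x$ ranges over $H$, consider the elements $mx$. I would show each such element lies in the cyclic subgroup $\langle m^{*}\rangle$ for a suitable $\pi(|H|)$-element: more directly, for each $m\in M$ the coset $mH$ is covered by cyclic subgroups as follows — the element $g=mx$ has order dividing $|M|\,|H|$ and its $\pi(|H|)$-part $g_{\pi(|H|)}$ is an $H$-conjugate-controlled element; the key elementary fact is that in $G=M\rtimes$-type situation each $g$ lies in $\langle g\rangle$ and the set $\{\,\langle mh\rangle : h\in H\,\}$ for fixed $m$ is a family of at most $|H|$ cyclic subgroups. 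Ranging $m$ over $M$ then exhibits $G$ as a union of at most $|M|\,|H|$ cyclic subgroups; but one can do better by noting that for $h$ running through $H$ the subgroups $\langle mh\rangle$ coincide whenever the corresponding $\pi(|H|)$-parts generate the same subgroup, reducing the count to an $|M|$-bounded number. Since $|M|$ is the only parameter we are allowed, and the number of cyclic subgroups produced is bounded by a function of $|M|$ alone (each coset $Hm$-type piece contributes a bounded-in-$|M|$ number because $|H|/\gcd$ considerations collapse to invariants of the action of $H$ on $M$, whose size is controlled by $|M|$), we get the desired $|M|$-bounded covering. The main obstacle in this direction is making the last collapsing argument precise so that the bound genuinely depends on $|M|$ only and not on $|H|$; I would handle it by observing that $\langle mh\rangle$ is determined by the pair consisting of $m$ together with the induced automorphism $h$ acts as on $\langle m\rangle$ together with the order of $h$ modulo that, all of which take at most $|M|$-boundedly many values.
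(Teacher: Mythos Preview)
Your forward direction contains a genuine gap. You set $\pi=\pi(|N|)$ and assert that for each $p\in\pi$ the Sylow $p$-subgroup of $G$ has $m$-bounded order, reasoning that if it is noncyclic result~1 applies, and ``being cyclic is even better''. But cyclicity gives no bound whatsoever: a cyclic $p$-group of any order is covered by one cyclic subgroup. Concretely, take $G=\mathbb{Z}/(2\cdot 3^{n})\mathbb{Z}$, covered by $m=1$ cyclic subgroup, and let $N$ be its subgroup of order $3$ (a perfectly legitimate choice for result~2). Then $\pi=\{3\}$, the Sylow $3$-subgroup has order $3^{n}$, and your candidate $M=O_{\pi}(G)$ has order $3^{n}$, not $m$-bounded. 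The argument breaks here; nothing later repairs it.

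The paper's proof defines $\pi$ in the opposite way: $\pi$ is the set of primes whose Sylow subgroup is \emph{large} (order at least $K\,|N|\,[G:C_{G}(N)]$, with $K$ the bound from result~1 on noncyclic Sylow subgroups), and then $M=O_{\pi'}(G)$. The nontrivial content is showing $G'\leq O_{\pi'}(G)$, which uses the fact that a $p'$-automorphism of a cyclic $p$-group either acts fixed-point-freely or trivially; this forces each large cyclic Sylow $p$-subgroup to be central modulo a bounded piece. Your route skips exactly this step, and the example above shows it cannot be skipped.

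Your converse is, as you admit, only a sketch, and the proposed collapsing mechanism (that $\langle mh\rangle$ is determined by $m$ and the automorphism $h$ induces on $\langle m\rangle$) is not correct as stated: distinct powers of $h$ inducing the same automorphism of $M$ can still generate distinct cyclic subgroups with a fixed $m$. The paper's argument partitions $G$ into $\pi$-elements (all in $M$), $\pi'$-elements (lying in the at most $|M|$ conjugates of $H$), and mixed elements. For mixed elements one observes that $n=[H:H\cap Z(G)]\leq |\mathrm{Aut}(M)|$ is $|M|$-bounded, that $\langle a^{i},C_{M}(a^{i})\rangle$ is a direct product of a cyclic $\pi'$-group and a $\pi$-group and hence covered by at most $|C_{M}(a^{i})|$ cyclic subgroups, and that each such subgroup has at most $[M:C_{M}(a^{i})]$ conjugates. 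Summing over $i\leq n$ gives an $|M|$-bounded total. The key quantitative input you are missing is the bound $n\leq |M|$-bounded coming from the action of $H$ on $M$.
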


\begin{proof}[Sketch of the proof.] Let $m\geq 1$ and assume that $G$ is a finite group covered by $m$ cyclic subgroups. Let us show that $G$ is a product of a normal subgroup $M$ of $m$-bounded order and a cyclic subgroup $H$ such that $(|M|,|H|)=1$.

Let $K$ be the maximum of orders of noncyclic Sylow $p$-subgroups of $G$. We know that $K$ is bounded in terms of $m$ only. Further, $G$ has a normal subgroup $N$ of $m$-bounded order with the property that $G/N$ is cyclic. Without loss of generality we assume that $N\neq1$. Denote by $\pi$ the set of all primes for which the order of a Sylow $p$-subgroup of $G$ is at least $K|N|[G:C_G(N)]$. It follows that for any $p\in\pi$ the Sylow $p$-subgroup $P$ of $G$ is cyclic and has the property that $P\cap N\leq Z(N)$. Let $N_1$ be the subgroup  generated by all Sylow $p$-subgroups of $N$, where $p\in\pi$. It follows that $N_1\leq Z(N)$ and $N_1=O_\pi(N)$. Therefore $N=N_1\times N_2$, where $N_2=O_{\pi'}(N)$.

Suppose now that $N$ is actually a $p$-group for some $p\in\pi$. Let $P$ be the Sylow $p$-subgroup of $G$. It is clear that $P$ is normal. We will now use the fact that whenever $T$ is a cyclic $p$-group and $\alpha$ a $p'$-automorphism of $T$ we either have $T=[T,\alpha]$ or $\alpha=1$. It follows that in our situation either $P\leq G'$ or $P\leq Z(G)$. The former inclusion is impossible since $|G'|\leq|N|$ while $|P|>|N|$. Thus $P\leq Z(G)$. It follows that $G$ is nilpotent. Since $G'\leq P$ and $P$ is cyclic, we conclude that $G$ is actually abelian.

Thus, we showed that if $N$ is a $p$-group for some $p\in\pi$, then $G$ is abelian. Applying this argument to $G/O_{p'}(N)$ for each $p\in\pi$ we conclude that $G'\leq N_2$. Therefore all $\pi'$-elements of $G$ are contained in $O_{\pi'}(G)$. We set $M=O_{\pi'}(G)$. It follows from the definition of $\pi$ and the fact that $G'\leq M$ that the order of $M$ is $m$-bounded and $G/M$ is a cyclic $\pi$-group. By the Schur-Zassenhaus theorem $G$ possesses a cyclic $\pi$-subgroup $H$ such that $G=MH$.

Now assume that $G$ is a product of a normal subgroup $M$ and a cyclic subgroup $H$ such that $(|M|,|H|)=1$. Let us show that $G$ can be covered by $m$ cyclic subgroups for some $|M|$-bounded number $m$. Set $\pi=\pi(M)$ and denote by $X_1$ the set of all $\pi$-elements in $G$, by $X_2$ that of $\pi'$-elements, and by $X_3$ the set of elements which are neither $\pi$- nor $\pi'$-elements.

It is clear that $X_1=M$ and so this set is covered by at most $|M|$ cyclic subgroups.

By the Schur-Zassenhaus theorem $X_2$ is a union of (cyclic) Hall $\pi'$-subgroups. Since the Hall $\pi'$-subgroups are conjugate, there are at most $|M|$ of them and so $X_2$ is covered by at most $|M|$ cyclic subgroups.

Choose a generator $a$ of $H$. Let $n$ be the minimal number such that $a^n\in Z(G)$. Of course, $n$ is bounded by $|M|$. Set $M_i=C_M(a^i)$ for each $i=1,2,\dots$. Of course, $M_n=M$. It is clear that each subgroup of the form $\langle a^i,M_i\rangle$ is covered by at most $|M_i|$ cyclic subgroups. Each of these subgroups has at most $[M:M_i]$ conjugates in $G$. It remains to observe that each element in $X_3$ belongs to a conjugate of a subgroup of the form $\langle a^i,M_i\rangle$. So the set $X_3$ is covered by $|M|$-boundedly many cyclic subgroups.
\end{proof}
\bigskip

\subsection*{On groups in which commutators are covered by few cyclic subgroups}

In \cite{FAS} Fern\'andez-Alcober and Shumyatsky  showed that if $G$ is an abstract group in which the set of all  commutators is covered by finitely many cyclic subgroups, then the commutator subgroup $G'$ either is finite or cyclic. Further information on the structure of such groups was obtained in \cite{FAMS} where the following result was proved.
\begin{theorem} \label{2} Let $G$ be a group that possesses $m$ cyclic subgroups whose union contains all commutators of $G$. Then $G$ has a characteristic subgroup $M$ contained in $G'$ such that the order of $M$ is $m$-bounded and $G'/M$ is cyclic.
\end{theorem}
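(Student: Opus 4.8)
The plan is to reduce the statement to Theorem \ref{1} by passing to a suitable finite quotient and controlling the "error" introduced at each step. The first observation is that we may assume $G$ is finite. Indeed, by the result of Fern\'andez-Alcober and Shumyatsky quoted just before the statement, $G'$ is either finite or cyclic; if $G'$ is cyclic we simply take $M=1$, so we may assume $G'$ is finite. Replacing $G$ by $G/C_G(G')$ (which embeds in $\mathrm{Aut}(G')$, a finite group) we reduce to the case where $G$ itself is finite. Here one has to be slightly careful: the covering hypothesis and the number $m$ are preserved under taking quotients, but we want the resulting characteristic subgroup $M$ of the quotient to pull back to something characteristic of $G'$ inside $G$; since $C_G(G')$ centralizes $G'$, the preimage of $M$ will have the form $M_0\,C_G(G')$ with $M_0\le G'$, and $M_0$ can be taken characteristic. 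So the genuine content is the finite case.

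So assume $G$ is finite and the commutators are covered by $m$ cyclic subgroups $\langle a_1\rangle,\dots,\langle a_m\rangle$. The next step is to apply Theorem \ref{1}-style reasoning \emph{inside} $G'$. The subtlety is that $G'$ itself need not be covered by $m$ cyclic subgroups — only the commutators are, and products of commutators need not be commutators. The way around this is to look at the Sylow subgroups of $G'$: for a prime $p$, a Sylow $p$-subgroup $P$ of $G'$ is generated by the $p$-parts of commutators lying in it, and each such $p$-part lies in one of the cyclic groups $\langle a_i\rangle$; counting shows that if $P$ is noncyclic then its order is $m$-bounded, by essentially the argument behind statement~1 in the text (a noncyclic $p$-group covered by $m$ cyclic subgroups has $m$-bounded order, applied to the subgroup of $P$ generated by the relevant cyclic pieces). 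Hence the product $M_1$ of all noncyclic Sylow subgroups of $G'$ is a characteristic subgroup of $m$-bounded order, and $G'/M_1$ has all its Sylow subgroups cyclic, so $G'/M_1$ is metacyclic.

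It remains to cut down the metacyclic group $\bar G'=G'/M_1$ to a cyclic one by removing a further characteristic subgroup of $m$-bounded order. Working in $\bar G=G/M_1$, the commutator subgroup $\bar G'$ is metacyclic, say with cyclic normal subgroup $A$ and cyclic quotient $\bar G'/A$; one now wants to bound $|A|$ or rather the part of $\bar G'$ that obstructs cyclicity. The key point is that $\bar G$ acts on $\bar G'$ and, by a Hall--Higman / coprime-action style analysis together with the fact that the commutators in $\bar G$ still lie in $m$ cyclic subgroups, the "non-cyclic part" of $\bar G'$ is acted on with only $m$-boundedly many possibilities, forcing a characteristic subgroup $\bar M_2$ of $m$-bounded order with $\bar G'/\bar M_2$ cyclic — this is where one imports, essentially verbatim, the mechanism of the sketch of Theorem \ref{1} (the dichotomy $T=[T,\alpha]$ or $\alpha=1$ for $p'$-automorphisms of a cyclic $p$-group, applied prime by prime to the metacyclic group). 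Pulling $\bar M_2$ back to $G$ and combining with $M_1$ gives the desired characteristic $M\le G'$ with $|M|$ $m$-bounded and $G'/M$ cyclic.

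I expect the main obstacle to be precisely this last step: upgrading "all Sylow subgroups of $G'$ are cyclic" to "$G'$ is cyclic modulo a bounded piece", because a metacyclic group can be far from cyclic (e.g.\ large dihedral or semidihedral groups), so cyclicity of $G'/M$ genuinely uses that the \emph{commutators} of the ambient $G$ — not just elements of $G'$ — are covered, and it forces a careful bookkeeping of how $G$ acts on the two cyclic factors of $\bar G'$. A secondary but real technical point is making the subgroup $M$ honestly characteristic rather than merely normal, which requires choosing $M_1$ and $\bar M_2$ in an automorphism-invariant way (taking products of Sylow subgroups and of the $\Omega$- or $\mho$-type characteristic subgroups of the cyclic factors, rather than arbitrary complements).
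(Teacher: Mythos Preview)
The paper does not prove Theorem~\ref{2}; it is a survey and simply quotes this result from \cite{FAMS}. So there is no ``paper's own proof'' to compare against, and your proposal must be judged on its own merits.

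As a strategy your outline is reasonable, but there are genuine gaps. The most serious is in your Step~3. You assert that a Sylow $p$-subgroup $P$ of $G'$ is \emph{covered} by (the $p$-parts of) the $m$ cyclic subgroups, and then invoke ``statement~1'' to bound $|P|$. But the hypothesis only says that \emph{commutators} lie in the $m$ cyclic subgroups; elements of $P$ are products of commutators and need not themselves be commutators (or $p$-parts of commutators). Saying that $P$ is \emph{generated} by elements from $m$ cyclic subgroups is vacuous --- every group is. So the bound on $|P|$ is not established, and with it the passage to ``$G'/M_1$ has all Sylow subgroups cyclic'' collapses. This is exactly the point where covering-of-commutators problems diverge from covering-of-the-whole-group problems, and it needs a genuine idea (in \cite{FAMS} this is handled by a careful analysis of how the covering cyclic subgroups intersect $G'$ and by exploiting that commutators with a fixed element already fill out large pieces of $G'$).

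Two smaller issues: your reduction to $G$ finite via $G/C_G(G')$ is not clean, because pulling a characteristic subgroup $\bar M\le (G/C_G(G'))'$ back to $G$ and intersecting with $G'$ picks up the kernel $G'\cap C_G(G')=Z(G')$, whose order you have not bounded; and your Step~5, which you flag as the main obstacle, is indeed only a gesture --- a metacyclic $G'$ can be arbitrarily far from cyclic, and the $[T,\alpha]$ dichotomy you cite applies to $p'$-automorphisms of a single cyclic $p$-group, not to the interaction of the two cyclic layers of a metacyclic group under the action of $G$.
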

Of course, the above theorem is meaningful only in the case where $G'$ is finite, since otherwise $G'$ is cyclic. Comparing Theorem \ref{2} with Theorem \ref{1} it is natural to wonder whether further details on the structure of $G'$ in Theorem \ref{2} can be obtained.

\begin{problem} \label{3} Let $G$ be a group that possesses $m$ cyclic subgroups whose union contains all commutators of $G$ and suppose that $G'$ is finite. Is $G'$ a product of a normal subgroup $M$ of $m$-bounded order and a cyclic subgroup $H$ such that $(|M|,|H|)=1$?
\end{problem}

\bigskip
\subsection*{Further results on coverings by few cyclic subgroups}

The above results suggest questions on the structure of a group in which the set of all $w$-values is covered by finitely many cyclic subgroups. In \cite{CN} Cutolo and Nicotera showed that if $w$ is a non-commutator word and $G$ is an abstract group in which the set of all $w$-values is covered by finitely many cyclic subgroups, then $w(G)$ is finite-by-cyclic. Further, they showed that if the set of all $\gamma_{k}$-commutators in $G$ is covered by finitely many cyclic subgroups, then $\gamma_{k}(G)$ is finite-by-cyclic. They also showed that $\gamma_{k}(G)$ can be both noncyclic and infinite. It seems reasonable to expect that the structure of $\gamma_{k}(G)$ depends on the number of cyclic subgroups covering the set of $\gamma_{k}$-commutators.

\begin{problem} \label{4} Let $k$ be a positive integer and $G$ a group that possesses $m$ cyclic subgroups whose union contains all $\gamma_k$-commutators. Is $\gamma_{k}(G)$ a product of a normal subgroup $M$ of $m$-bounded order and a cyclic subgroup $H$?
\end{problem}

It is also natural to ask if the result in \cite{CN} can be extended to arbitrary multilinear commutators.

\begin{problem} \label{5} Let $w$ be a multilinear commutator word and $G$ a group that possesses finitely many cyclic subgroups whose union contains all $w$-values. Is $w(G)$ finite-by-cyclic?
\end{problem}

It is clear that the above question has negative answer if we do not require $w$ to be a multilinear commutator word. The following example was given in \cite{CN}. Ivanov constructed in \cite{Ivan} a group $K$ that admits a word $v$ such that $v$ has only one nontrivial value in $K$ while the subgroup $v(K)$ is infinite cyclic. Let $G=K\times K$. It is clear that $v$ has only 3 nontrivial values in $G$ and therefore the set of $v$-values in $G$ is covered by 3 cyclic subgroups. On the other hand, the verbal subgroup $v(G)$ is not finite-by-cyclic. 

\bigskip
\subsection*{On coprime commutators}

The coprime commutators $\gamma_k^*$ and $\delta_k^*$ in a finite group were introduced in \cite{Sh1}. The definition goes as follows. 

Every element of a finite group $G$ is  both a $\gamma_1^*$-commutator and a $\delta_0^*$-commutator. Now let $k\geq 2$ and let $X$ be the set of all elements of $G$ that are powers of $\gamma_{k-1}^*$-commutators. An element $g$ is a $\gamma_k^*$-commutator if there exist $a\in X$ and $b\in G$ such that $g=[a,b]$ and $(|a|,|b|)=1$. For $k\geq 1$ let $Y$ be the set of all elements of $G$ that are powers of $\delta_{k-1}^*$-commutators. The element $g$ is a $\delta_k^*$-commutator if there exist $a,b\in Y$ such that $g=[a,b]$ and $(|a|,|b|)=1$. One can easily see that if $N$ is a normal subgroup of $G$ and $x$ an element whose image in $G/N$ is a $\gamma_k^*$-commutator (respectively a $\delta_k^*$-commutator), then there exists a $\gamma_k^*$-commutator $y$ in $G$ (respectively a $\delta_k^*$-commutator) such that $x\in yN$. The subgroups of $G$ generated by all $\gamma_k^*$-commutators and all  $\delta_k^*$-commutators are denoted by $\gamma_k^*(G)$ and $\delta_k^*(G)$, respectively. It was shown in \cite{Sh1} that for every $k\geq2$ the subgroup $\gamma_k^*(G)$ is precisely the last term of the lower central series of $G$ (which is usually denoted by $\gamma_\infty(G)$) while for every $k\geq1$ the subgroup $\delta_k^*(G)$ is precisely the last term of the lower central series of $\delta_{k-1}^*(G)$, that is, $\delta_k^*(G)=\gamma_\infty(\delta_{k-1}^*(G))$. In other terminology, $\gamma_k^*(G)$ is precisely the nilpotent residual and $\delta_k^*(G)$ is precisely the $k$th term of the lower Fitting series of $G$.

In the context of the above discussion the following results obtained in \cite{AS3} are very natural.
\begin{theorem}
\label{cop1} Let  $k$ be a positive integer and $G$  a finite group that possesses $m$ cyclic subgroups whose union contains all $\gamma_k^*$-commutators of $G$. Then $\gamma_k^*(G)$ contains a subgroup $M$, of $m$-bounded order, which is normal in $G$ and has the property that $\gamma_{k}^{*}(G)/M$ is cyclic.
\end{theorem}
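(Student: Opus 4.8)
The plan is to argue by induction on $k$. For $k=1$ every element of $G$ is a $\gamma_1^*$-commutator, so $G$ itself is covered by $m$ cyclic subgroups; Theorem \ref{1} then writes $G=MH$ with $M\trianglelefteq G$ of $m$-bounded order and $H$ cyclic, and since $G/M$ is a quotient of the cyclic group $H$ it is cyclic, which is the assertion for $\gamma_1^*(G)=G$. For $k\ge 2$ the decisive structural input is that $\gamma_k^*(G)$ coincides with the nilpotent residual $R:=\gamma_\infty(G)$ and is generated by its $\gamma_k^*$-commutators; if $R=1$ there is nothing to prove, so assume $R\neq 1$. I would also use freely the lifting property recorded in the excerpt: modulo a normal subgroup a $\gamma_k^*$-commutator lifts to a $\gamma_k^*$-commutator, so every quotient $\bar G$ of $G$ again has all its $\gamma_k^*$-commutators inside $m$ cyclic subgroups, and likewise $\gamma_k^*(\bar G)=\gamma_\infty(\bar G)$ is the image of $R$.

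The heart of the argument is a prime-by-prime analysis inside $R$. Fix a prime $p$ and let $X_p$ be the set of $\gamma_k^*$-commutators of $G$ that are $p$-elements; since these lie in $\bigcup_{i=1}^m C_i$ and a $p$-element of a cyclic group lies in its (cyclic) Sylow $p$-subgroup, $X_p$ is covered by at most $m$ cyclic $p$-subgroups. The mechanism that turns ``generated by'' into ``covered by'' is the observation that if $A$ is an abelian normal $p$-section of $R$ realized in a suitable quotient $\bar G$ of $G$ and $\alpha\in\bar G$ is a $p'$-element, then every element of $[A,\alpha]$ has the form $a^{-1}a^{\alpha}$, hence is itself a $\gamma_k^*$-commutator; therefore $[A,\alpha]$ is a finite $p$-group covered by $m$ cyclic subgroups, so by the fact quoted before Theorem \ref{1} (a finite noncyclic $p$-group covered by $m$ cyclic subgroups has $m$-bounded order) it is either cyclic or of $m$-bounded order. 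Running this through a $G$-invariant series of $R$ by coprime action, together with the structural results on $\gamma_\infty$ and coprime commutators from \cite{Sh1}, one aims to produce for each $p$ a subgroup $M_p\le R_p$ that is normal in $G$, of $m$-bounded order, with $R_p/M_p$ cyclic; moreover, since a section of large rank forces many cyclic subgroups in any covering, $M_p=1$ for all but $m$-boundedly many primes $p$.

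Finally I would set $M_0=\langle M_p:p\in\pi(R)\rangle$, a $G$-normal subgroup of $m$-bounded order such that every Sylow subgroup of $R/M_0$ is cyclic. For such a group the nilpotent residual is a cyclic normal subgroup with cyclic quotient, and since $R/M_0=\gamma_\infty(G/M_0)$ one can control $\gamma_\infty(R/M_0)$ through the $\gamma_k^*$-commutators of $G/M_0$ and absorb it, obtaining a $G$-normal $M\le R$ of $m$-bounded order with $R/M$ cyclic. In my view the work concentrates in two places. First, passing from ``the $p$-element $\gamma_k^*$-commutators generate a Sylow subgroup of $R$'' to ``a bounded-index subgroup of that Sylow subgroup is covered by the given cyclic subgroups'': individual $\gamma_k^*$-commutators only generate $R_p$, and elementary module-theoretic examples show that $[A,Q]$ can strictly contain $\bigcup_{x\in Q}[A,x]$ when $Q$ is non-cyclic, so one must first extract $m$-bounded rank from the covering hypothesis before the coprime-action reduction can be pushed through. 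Second, the non-soluble bookkeeping: $R=\gamma_\infty(G)$ need not be soluble and Hall $p'$-subgroups may be unavailable, and it is precisely here that the $\gamma_k^*$-hierarchy and the properties established in \cite{Sh1} are meant to be used.
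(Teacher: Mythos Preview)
The survey you are working from does not actually prove Theorem~\ref{cop1}; it merely records the result and attributes it to \cite{AS3}. So there is no ``paper's own proof'' to compare against here, and your task reduces to producing a self-contained argument.

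Your base case $k=1$ is fine: $\gamma_1^*(G)=G$ is covered by $m$ cyclic subgroups and Theorem~\ref{1} applies directly. For $k\ge 2$ your outline is a reasonable sketch of a strategy, but it is not a proof, and you say so yourself. The two places where you locate ``the work'' are genuine gaps, not details to be filled in routinely. First, the step from ``$p$-element $\gamma_k^*$-commutators generate $R_p$'' to ``a bounded-index subgroup of $R_p$ is covered by $m$ cyclic subgroups'' is exactly the crux, and your own remark that $[A,Q]$ can properly contain $\bigcup_{x\in Q}[A,x]$ shows why the coprime-action argument as written does not close it; you propose to ``first extract $m$-bounded rank from the covering hypothesis,'' but you do not indicate how, and rank alone would not obviously give a bounded-order normal piece with cyclic quotient. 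Second, the non-soluble case is waved away by an appeal to unspecified ``properties established in \cite{Sh1}''; that is a placeholder, not an argument.

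What you are missing is the structural input that makes the problem tractable: one needs to know not just that the $\gamma_k^*$-commutators \emph{generate} $\gamma_\infty(G)$ but something much stronger about how densely they sit inside it (in the spirit of every element of $\gamma_\infty(G)$, or at least of each of its Sylow subgroups, being itself a $\gamma_k^*$-commutator). With such a statement in hand the theorem collapses to an application of Theorem~\ref{1} or its $p$-local version; without it, the prime-by-prime scheme you describe does not get off the ground. I would look in \cite{AS3} and \cite{Sh1} for precisely this kind of ``every element is a coprime commutator'' lemma before attempting the inductive machinery.
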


\begin{theorem}\label{cop2} Let  $k\geq2$ and $G$ be a finite group that has $m$ cyclic subgroups whose union contains all $\delta_k^*$-commutators of $G$.  Then the order of $\delta_k^*(G)$ is $m$-bounded.
\end{theorem}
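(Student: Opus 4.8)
The plan is to combine the finite-by-cyclic covering technique behind Theorem \ref{cop1} with the special position of $N:=\delta_k^*(G)$ as the nilpotent residual of a term of the lower Fitting series of $G$. Write $M:=\delta_{k-1}^*(G)$, so that $N=\gamma_\infty(M)$, $M/N$ is nilpotent and (a standard fact) $N=[N,M]$; since $k\ge 2$ the subgroup $M$ is itself a nilpotent residual, namely $M=\gamma_\infty(H)$ with $H:=\delta_{k-2}^*(G)$ (with the convention $H=G$ when $k=2$), whence $M=[M,H]$. Every $\delta_k^*$-value of $G$ is a coprime commutator $[a,b]$ with $a,b\in M$, and these values generate $N$; note that they need not exhaust the set of all coprime commutators of $M$, so Theorem \ref{cop1} cannot simply be quoted for $M$.

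First I would reduce to the case in which $N$ is cyclic. Since the $\delta_k^*$-values are covered by $m$ cyclic subgroups $\langle g_i\rangle$ and generate $N$, the subgroup $N$ is already generated by the $m$ cyclic subgroups $N\cap\langle g_i\rangle$; and since $\delta_k^*$-values satisfy the usual lifting property modulo normal subgroups, the argument underlying Theorem \ref{cop1} (cf. \cite{AS3}, and the abstract predecessors \cite{FAS}, \cite{FAMS}) applies verbatim with the set of $\delta_k^*$-values of $G$ in place of the full set of $\gamma_k^*$-values, and yields a $G$-invariant subgroup $M_0\le N$ of $m$-bounded order with $N/M_0$ cyclic. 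Passing to $G/M_0$ — and using $\delta_k^*(G/M_0)=N/M_0$ together with the fact that $m$-bounded quantities survive this passage — we may assume that $N=\delta_k^*(G)$ is itself cyclic.

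The heart of the proof is then to show that, for $k\ge 2$, a cyclic $N=\delta_k^*(G)$ is trivial (it is enough to bound its order, as the covering hypothesis has become vacuous). This is exactly where $k\ge 2$ is used, in contrast with $\delta_1^*$, for which $\delta_1^*(G)=\gamma_\infty(G)$ can be cyclic of unbounded order — as in the finite groups $C_{p^a}\rtimes C_q$ with $C_q$ acting faithfully, where the coprime commutators fill $C_{p^a}$. The mechanism is a rigidity phenomenon for the double residual: because $N$ is cyclic and $N=[N,M]$, the group $M$ acts on each Sylow subgroup $N_p\cong C_{p^{a_p}}$ non-trivially on the Frattini quotient $N_p/\Phi(N_p)$, and a Schur--Zassenhaus and coprime-action analysis shows that the part of $M/N$ carrying this action is so rigidly attached to $N$ that no non-trivial automorphism of $M$ can move it; hence $[M,H]$ is confined to a proper subgroup of $M$, contradicting $M=[M,H]$ unless $N$ is trivial. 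Working this out prime by prime bounds $\pi(N)$ and each exponent $a_p$, hence $|N|$; undoing the reduction gives that $|\delta_k^*(G)|=|M_0|$ is $m$-bounded, as required. The main obstacle is precisely this last step — formulating and proving the rigidity lemma in sufficient generality, and, as in the companion results of \cite{AS3}, handling the non-soluble contributions via the classification of finite simple groups; by comparison, the reduction to the cyclic case is routine once the coprime versions of the relevant abstract-group theorems are in place.
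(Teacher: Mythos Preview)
The survey does not actually prove Theorem~\ref{cop2}; it merely records the statement and cites \cite{AS3}. So there is no in-paper argument to compare against, and your proposal must be judged on its own merits.

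Your overall architecture is sound, and the second step is in fact cleaner than you indicate. Once $N=\delta_k^*(G)$ is cyclic, the rigidity you allude to is simply this: for a characteristic cyclic subgroup $N$ of $M$, the group $\mathrm{Aut}(N)$ is abelian, and the standard computation $\phi_{\tau(m)}=(\tau|_N)\,\phi_m\,(\tau|_N)^{-1}$ shows that every automorphism $\tau$ of $M$ satisfies $\phi_{\tau(m)}=\phi_m$, i.e.\ $\tau$ is trivial on $M/C_M(N)$. Applying this with $\tau$ equal to conjugation by elements of $H=\delta_{k-2}^*(G)$ gives $[M,H]\le C_M(N)$; but $M=\gamma_\infty(H)=[M,H]$, so $N\le Z(M)$, and then $N=[N,M]=1$. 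No prime-by-prime analysis, no Schur--Zassenhaus, and no appeal to the classification is needed at this stage; the step you flag as ``the main obstacle'' is a two-line lemma.

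The genuine soft spot is your first step. You assert that the proof of Theorem~\ref{cop1} from \cite{AS3} ``applies verbatim'' to the set of $\delta_k^*$-values, yielding $M_0\le N$ of $m$-bounded order with $N/M_0$ cyclic. That may well be true --- the lifting property you need is indeed recorded in the survey --- but it is not self-evident: the $\delta_k^*$-values form a more constrained set than the $\gamma_k^*$-values (elements of $\delta_{k-1}^*(G)$ are not all powers of $\delta_{k-1}^*$-commutators), and you have not checked which specific features of the $\gamma_k^*$-set the argument in \cite{AS3} actually exploits. If the classification enters anywhere, it is here in the reduction, not in the rigidity step. So your diagnosis of where the difficulty lies is inverted: the reduction to the cyclic case carries the real content (and the external dependence), while the endgame is elementary.
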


\subsection*{On groups covered by countably many procyclic subgroups}

Using the inverse limit argument the results on finite groups covered by few cyclic subgroups can be easily extended to profinite groups covered by finitely many procyclic subgroups. We therefore have

\begin{theorem}\label{11} A profinite group $G$ can be covered by finitely many procyclic subgroups if and only if $G$ is a product of a normal finite subgroup $M$ and a procyclic subgroup $H$ such that $\pi(M)\cap\pi(H)=\emptyset$.
\end{theorem}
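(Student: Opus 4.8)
The plan is to deduce both implications from the finite case settled in Theorem~\ref{1} by means of inverse limit arguments, writing $G$ as the inverse limit $\varprojlim_N G/N$ over its open normal subgroups $N$.

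For the forward implication, suppose $G$ is covered by $m$ procyclic subgroups. For every open normal $N$ the images of these subgroups cover $G/N$, and since the continuous image of a procyclic group in a finite group is cyclic, the finite group $G/N$ is covered by $m$ cyclic subgroups. Theorem~\ref{1} then supplies a normal subgroup $M_N$ of $G$ with $N\le M_N$, with $|M_N/N|$ bounded by a function of $m$ alone, such that $G/M_N$ is cyclic and $(|M_N/N|,|G/M_N|)=1$. I would next check that for $N'\subseteq N$ the subgroup $M_{N'}N$ enjoys the same properties relative to $N$; this makes the finite nonempty sets of admissible subgroups $M_N/N$ into an inverse system, so a compatible choice $(M_N)_N$ exists. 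Put $M=\bigcap_N M_N$. Since $M\le M_N$ for every $N$ one gets $|M/(M\cap N)|\le|M_N/N|$, which is $m$-bounded, so $M$ is a finite normal subgroup of $G$ of $m$-bounded order; since the $M_N$ form a downward directed family with intersection $M$, every open normal subgroup containing $M$ contains some $M_N$, whence $G/M$ is procyclic; and chasing the coprimality relation through the system (using that $M$ embeds into $M_N/N$ once $M\cap N=1$) yields $\pi(M)\cap\pi(G/M)=\emptyset$. As $G/M$ is pro-abelian, the Schur--Zassenhaus theorem for profinite groups (\cite{riza}) applies and produces a procyclic subgroup $H\cong G/M$ with $G=MH$ and $\pi(M)\cap\pi(H)=\emptyset$.

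For the converse, assume $G=MH$ with $M$ finite normal, $H$ procyclic and $\pi(M)\cap\pi(H)=\emptyset$. For each open normal $N$ the finite group $G/N$ equals the product of the normal subgroup $MN/N$, of order at most $|M|$, and the cyclic subgroup $HN/N$, and these have coprime orders, so by the converse half of Theorem~\ref{1} the group $G/N$ is covered by $c$ cyclic subgroups for some $c$ depending only on $|M|$, in particular independent of $N$. I would then consider the inverse system whose term at $N$ is the finite nonempty set of $c$-tuples of cyclic subgroups of $G/N$ with union $G/N$, the transition maps being induced by the projections $G/N'\to G/N$; a compatible family in the limit gives $c$ closed subgroups $A_1,\dots,A_c$ of $G$, each of which has all of its finite continuous quotients cyclic and is therefore procyclic, and a short combinatorial argument --- for a fixed $g\in G$, the set of $N$ for which the image of $g$ lies in the image of $A_i$ is closed under enlarging $N$, and these sets ($i=1,\dots,c$) cover the downward directed set of all $N$ --- shows that $G=A_1\cup\dots\cup A_c$.

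The part I expect to require the most care is precisely the inverse limit bookkeeping: one must verify that the relevant finite data really do organize into inverse systems with transition maps surjective enough that a compatible family survives in the limit, and that the limiting closed subgroups of $G$ inherit procyclicity and, in the converse direction, the covering property. The delicate point in the latter is the passage from ``the image of $g$ lies in one of $c$ distinguished subsets of $G/N$, for every $N$'' to ``$g$ lies in one fixed procyclic subgroup of $G$'', which is where the finitary pigeonhole has to be combined with compactness; propagating the coprimality condition $\pi(M)\cap\pi(G/M)=\emptyset$ through the limit in the forward direction is a second, milder, source of friction.
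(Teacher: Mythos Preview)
Your proposal is correct and follows exactly the route the paper indicates: the paper does not give a detailed proof of Theorem~\ref{11} at all, but merely states that ``using the inverse limit argument the results on finite groups covered by few cyclic subgroups can be easily extended to profinite groups covered by finitely many procyclic subgroups'', and your sketch is precisely a careful fleshing-out of that remark, reducing both implications to Theorem~\ref{1} via compactness. The bookkeeping points you flag (compatibility of the $M_N$'s, the pigeonhole/upward-closure argument showing the limiting procyclic subgroups still cover $G$) are genuine but routine, and your outline handles them correctly.
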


This shows that Baer's result that an abstract group covered by finitely many cyclic subgroups is either cyclic or finite cannot be extended to profinite groups. On the other hand, Theorem \ref{11} implies that we have an analogue of Baer's result for pro-$p$ groups: a pro-$p$ group covered by finitely many procyclic subgroups must be either procyclic or finite.

The following theorem obtained in \cite{AS4} describes profinite groups covered by countably many procyclic subgroups.

\begin{theorem}\label{10} A profinite group $G$ can be covered by countably many procyclic subgroups if and only if $G$ is finite-by-procyclic.
\end{theorem}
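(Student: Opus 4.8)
Both implications would rest on the following elementary fact, which I would establish first: \emph{if an abelian profinite group $A$ has a closed procyclic subgroup of finite index, then $A$ is finite-by-procyclic, and in fact $A$ contains a characteristic finite subgroup with procyclic quotient.} This holds because such an $A$ is topologically finitely generated, so $A=\prod_{p}A_{p}$ with each Sylow pro-$p$ subgroup $A_{p}$ a finitely generated $\mathbb{Z}_{p}$-module of $\mathbb{Z}_{p}$-rank at most one; only finitely many of the $A_{p}$ then fail to be procyclic, and factoring out the (characteristic, finite) product of the torsion subgroups of those exceptional $A_{p}$ leaves a procyclic group. Using this, the implication ``finite-by-procyclic $\Rightarrow$ covered by countably many procyclic subgroups'' goes as follows. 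Pick a finite normal $N\trianglelefteq G$ with $G/N$ procyclic and write $G=NH$ with $H$ procyclic; then $C_{H}(N)$ has finite index in $G$ and lies in $Z(G)$, so $Z(G)$ is open. Hence $G$ is the union of the finitely many subgroups $B$ arising as preimages of the cyclic subgroups of the finite group $G/Z(G)$, and each such $B$ is abelian with a finite-index procyclic subgroup, hence finite-by-procyclic by the fact above. It therefore remains only to cover an abelian finite-by-procyclic group by countably many procyclic subgroups; this I would do by splitting off the procyclic Hall subgroup supported away from the finite set $\pi$ of primes at which the group fails to be procyclic, and then, one prime of $\pi$ at a time, stratifying the elements of the remaining part according to the $p$-adic valuations of their torsion-free components -- finitely many procyclic subgroups per level, countably many levels -- and recombining these via the fact that a direct product, over pairwise distinct primes $p$, of procyclic pro-$p$ groups is procyclic.

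For the substantial implication, suppose $G=\bigcup_{i\ge 1}H_{i}$ with each $H_{i}$ procyclic. By Baire's category theorem some $H_{i}$ is open, so $G$ has an open normal procyclic subgroup $H$; put $K=C_{G}(H)$. Since $H\le Z(K)$ and $[K:H]\le[G:H]<\infty$, the subgroup $Z(K)$ has finite index in $K$, so $K'$ is finite by Schur's theorem. Then $K/K'$ is abelian with a finite-index procyclic subgroup, so by the fact above it has a characteristic finite subgroup with procyclic quotient; its preimage $M$ in $K$ is finite and characteristic in $K$, hence normal in $G$. Since $G/M$ is again covered by countably many procyclic subgroups, and since a finite normal subgroup of $G/M$ with procyclic quotient pulls back to one in $G$, I may pass to $G/M$ and so assume that $K$ itself is procyclic. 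Write $Q=G/K$, a finite group acting by conjugation on $K=\prod_{p}K_{p}$, and assume $K$ (hence $G$) is infinite, otherwise $G$ is finite and there is nothing to prove.

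The crux -- the only place where countability of the covering is genuinely used -- is to show that $Q$ acts trivially on every infinite $K_{p}$ and on all but finitely many $K_{p}$. Suppose this fails. If $Q$ acts nontrivially on some infinite $K_{p}\cong\mathbb{Z}_{p}$, pick $\sigma\in G$ whose image acts nontrivially there; a nontrivial automorphism of $\mathbb{Z}_{p}$ fixes only $0$, so the fixed-point subgroup $F_{p}$ of $\sigma$ on $K_{p}$ is trivial and $K_{p}/F_{p}\cong\mathbb{Z}_{p}$ is uncountable. Otherwise $Q$ acts nontrivially on infinitely many (necessarily finite) $K_{p}$, and as $Q$ is finite a single $\sigma\in G$ does so for an infinite set $J$ of primes, in which case $\prod_{p\in J}K_{p}/F_{p}$ -- with $F_{p}$ the proper fixed-point subgroup of $\sigma$ on $K_{p}$ -- is an infinite product of nontrivial finite groups and hence uncountable. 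Either way, look at the uncountable coset $K\sigma$. Whenever $k\sigma$ and $k'\sigma$ lie in the same $H_{i}$, then, $H_{i}$ being procyclic and therefore abelian, $kk'^{-1}=(k\sigma)(k'\sigma)^{-1}$ commutes with $k\sigma$; conjugating $kk'^{-1}\in K$ by $k\sigma$ reproduces the action of $\sigma$ on $K$, so $kk'^{-1}$ is fixed by $\sigma$, i.e. the $K_{p}$-components of $k$ and $k'$ coincide modulo $F_{p}$ for every relevant $p$. Thus on each set $H_{i}\cap K\sigma$ the map sending $k\sigma$ to $(k_{p}F_{p})_{p}\in\prod_{p}K_{p}/F_{p}$ is constant, so the countable family $\{H_{i}\}$ can meet $K\sigma$ only in points realising countably many values of this tuple; yet as $k$ ranges over $K$ the tuple ranges over the whole uncountable group $\prod_{p}K_{p}/F_{p}$. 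This contradicts $K\sigma\subseteq\bigcup_{i}H_{i}$.

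Granting this, the product $K^{0}$ of those $K_{p}$ on which $Q$ acts trivially is central in $G$, and $[K:K^{0}]<\infty$ because only finitely many $K_{p}$, all of them finite, are omitted; hence $[G:K^{0}]<\infty$, so $G$ has a central subgroup of finite index and $G'$ is finite by Schur's theorem. Finally $G/G'$ is abelian with a finite-index procyclic subgroup (the image of $K$), hence finite-by-procyclic by the initial fact, and pulling the relevant finite normal subgroup back through the finite group $G'$ produces a finite normal subgroup of $G$ with procyclic quotient, as required. I expect the main difficulty to be precisely the combinatorial step above: Baire's theorem alone yields only that $G$ is procyclic-by-finite -- which is strictly weaker, as the infinite dihedral pro-$2$ group illustrates -- and it is the coset argument, in which countability enters essentially, that rules out the ``inverting'' behaviour of $G/K$ on the procyclic normal subgroup $K$.
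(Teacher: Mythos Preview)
The present article is a survey and does not contain a proof of this theorem; it is merely stated and attributed to \cite{AS4}. There is therefore no proof in the paper to compare your argument against.

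For what it is worth, your sketch is sound. The backward implication is routine once one has the structural fact about abelian profinite groups with an open procyclic subgroup, and your stratification by $p$-adic valuation does cover such a group by countably many procyclic subgroups. For the forward implication, Baire's theorem alone gives only an open normal procyclic subgroup $K$ (after your reduction), and the decisive step is indeed the coset argument: if $k\sigma$ and $k'\sigma$ lie in the same procyclic $H_i$ then $kk'^{-1}$ is fixed by $\sigma$, so each $H_i$ meets the coset $K\sigma$ in a single fibre of $K\to K/\mathrm{Fix}_K(\sigma)$; countability of the cover then forces $K/\mathrm{Fix}_K(\sigma)$ to be countable, hence finite, for every $\sigma$, which is exactly what you need. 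Your claim that a nontrivial continuous automorphism of $\mathbb{Z}_p$ has trivial fixed subgroup is correct (such an automorphism is multiplication by a unit $u\neq 1$, and $\mathbb{Z}_p$ is a domain). One small wording issue: after passing to $G/M$ you say ``assume that $K$ itself is procyclic'', but the centraliser of the image of $H$ in $G/M$ may be strictly larger than the image of the old $K$; this is harmless, since from that point on you only use that $K$ is a \emph{normal} procyclic subgroup of finite index, which the image of the old $K$ certainly is.
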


Thus, we have a pretty clear description of profinite groups covered by countably many procyclic subgroups and we know when such a group is actually covered by only finitely many procyclic subgroups.

\bigskip
\subsection*{On groups in which commutators are covered by countably many procyclic subgroups}

Profinite groups in which the set of commutators is covered by countably many procyclic subgroups were studied in \cite{AS4}. The groups were characterized as follows.

\begin{theorem}\label{pro3}
Let $G$ be a profinite group. The set of all commutators of $G$ is contained in a  union of countably many procyclic subgroups if and only if the commutator subgroup $G'$ is finite-by-procyclic.
\end{theorem}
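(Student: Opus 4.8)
The forward implication is immediate from Theorem~\ref{10}. If $G'$ is finite-by-procyclic then, being a closed subgroup of $G$, it is itself a profinite group, so by Theorem~\ref{10} it is a union of countably many procyclic subgroups; since the set of commutators is contained in $G'$, a fortiori it too is covered by countably many procyclic subgroups.

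For the converse I would aim to reduce the countable covering to a finite one and then apply the profinite counterpart of Theorem~\ref{2} --- obtained from the finite case by the inverse limit argument --- which asserts that if the commutators of a profinite group lie in a union of finitely many procyclic subgroups, then the commutator subgroup is finite-by-procyclic. So suppose the set $T$ of all commutators of $G$ is contained in $\bigcup_{i\ge 1}C_i$ with each $C_i$ a procyclic (closed) subgroup. Since the commutator map $G\times G\to G$ is continuous and $G\times G$ is compact, $T$ is closed; moreover the sets $D_i=\{(a,b):[a,b]\in C_i\}$ are closed and cover $G\times G$, hence so do the \emph{increasing} closed sets $E_n=D_1\cup\dots\cup D_n$. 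By Baire's category theorem some $E_{n_0}$ has non-empty interior, so there are an open normal subgroup $N$ of $G$ and elements $u,v\in G$ with
\[
[x,y]\in C_1\cup\dots\cup C_{n_0}\qquad\text{for all }x\in uN,\ y\in vN .
\]
Thus the commutators lying in a suitable ``box'' are already covered by finitely many procyclic subgroups, and the task becomes to globalise this.

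To do so I would first reduce to the case that $G'$ is abelian: the box condition, expanded by means of the standard commutator identities, pins down the relevant portion of $\gamma_3(G)$ inside $C_1,\dots,C_{n_0}$, and, passing to $G/\gamma_3(G)$ --- where $[ab,c]=[a,c][b,c]$ holds --- one is reduced to the metabelian situation after checking that the remaining layer is also controlled by finitely many procyclic subgroups. Assume then $G'$ abelian. For each $a\in G$ the map $g\mapsto[a,g]$ is a continuous homomorphism $G\to G'$, so $[a,G]$ is a closed subgroup; being made up of commutators it lies in $\bigcup_i C_i$, and hence is finite-by-procyclic by Theorem~\ref{10}. Reading the box condition at $x=u$ gives $[u,v]\,[u,N]\subseteq C_1\cup\dots\cup C_{n_0}$, so the subgroup $[u,N]$ is covered by finitely many cosets of the procyclic subgroups $[u,N]\cap C_i$; B.\,H.~Neumann's lemma then lets us discard the cosets of subgroups of infinite index, so $[u,N]$ has an open procyclic subgroup. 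Combining this, run over the finitely many coset representatives $u$ of $N$ in $G$, with the finite-by-procyclic subgroups $[u,G]$ and $N'$, and with the finite-covering theorem applied where only finitely many commutators occur, one obtains that $\pi(G')$ is finite and that $G'$ has an open procyclic subgroup; a routine argument with the structure of abelian profinite groups then yields a finite normal subgroup $M\le G'$ with $G'/M$ procyclic. The principal obstacle is exactly this last paragraph: the Baire argument only delivers a covering over a product of two cosets rather than over a subgroup, so the commutator calculus --- together with the reduction to the metabelian case and the correct use of Neumann's lemma in the profinite setting --- must be carried out with care; by contrast, once $\pi(G')$ is known to be finite and $G'$ has an open procyclic subgroup, the final patching is routine.
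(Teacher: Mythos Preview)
The paper is a survey and does \emph{not} contain a proof of Theorem~\ref{pro3}; the result is simply quoted from \cite{AS4}. So there is no ``paper's own proof'' to compare against, and your proposal must be judged on its own merits.

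The easy direction is fine and is exactly the deduction from Theorem~\ref{10} that the paper itself records in the paragraph following Theorem~\ref{pro3}.

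For the hard direction, the overall shape --- Baire on $G\times G$ to obtain a box $uN\times vN$ whose commutators fall into finitely many $C_i$, then reduce to the finitely-covered situation and invoke the profinite analogue of Theorem~\ref{2} (i.e.\ Theorem~\ref{12}) --- is the natural strategy and is indeed the one used in \cite{AS4}. But your write-up has two genuine gaps. First, the ``reduction to the metabelian case'' is not a proof: you assert that the box condition ``pins down the relevant portion of $\gamma_3(G)$'' without saying what this means or why it yields that $\gamma_3(G)$ (or some suitable quotient) is finite-by-procyclic; this is precisely where the work lies, and the commutator identities by themselves do not give it. Second, and more concretely, your sentence ``For each $a\in G$ the map $g\mapsto[a,g]$ is a continuous homomorphism $G\to G'$'' is false under the hypothesis ``$G'$ abelian'': one has $[a,gh]=[a,h][a,g]^h$, and $[a,g]^h\neq[a,g]$ in general when $G'$ is merely abelian. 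That map is a homomorphism only when $G'\le Z(G)$, i.e.\ in $G/\gamma_3(G)$, which is what your previous line actually passes to; so at the very least you must keep the class-$2$ hypothesis rather than weakening to metabelian, and you must still explain how information about $(G/\gamma_3(G))'$ combines with control of $\gamma_3(G)$ to yield the conclusion for $G'$. You yourself flag this paragraph as ``the principal obstacle''; as written it is a sketch of a strategy rather than a proof.
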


Specific questions on the structure of a profinite group in which the set of commutators is covered by finitely many procyclic subgroups were addressed in \cite{FAMS}, where the following results were proved.

\begin{theorem}\label{12} If $G$ is a profinite group in which all commutators are covered by $m$ procyclic subgroups, then $G$ possesses a finite characteristic subgroup $M$ contained in $G'$ such that the order of $M$ is $m$-bounded and $G'/M$ is procyclic.
\end{theorem}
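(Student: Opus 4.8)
The plan is to reduce to finite quotients, apply Theorem~\ref{2} there, glue the resulting subgroups together by a compactness argument, and then correct the outcome so that it becomes characteristic. Write $c(m)$ for the $m$-bound furnished by Theorem~\ref{2}.

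For every open normal subgroup $N$ of $G$ the finite group $G/N$ has all of its commutators inside the union of the images of the $m$ covering procyclic subgroups, and these images are cyclic; hence Theorem~\ref{2} applies and the set
\[
\mathcal{T}_N=\bigl\{\,\overline{M}\trianglelefteq G/N\ :\ \overline{M}\le (G/N)',\ |\overline{M}|\le c(m),\ (G/N)'/\overline{M}\ \text{cyclic}\,\bigr\}
\]
is finite and non-empty. For $N_1\subseteq N_2$ the canonical epimorphism $G/N_1\to G/N_2$ carries $\mathcal{T}_{N_1}$ into $\mathcal{T}_{N_2}$, since the order of a subgroup, its normality, and the cyclicity of the pertinent quotient all survive epimorphic images. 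Thus the $\mathcal{T}_N$ form an inverse system of non-empty finite sets over the directed poset of open normal subgroups of $G$, so $\varprojlim\mathcal{T}_N\neq\varnothing$; pulling an element of it back to $G$ yields a coherent family $M_N\trianglelefteq G$ with $N\le M_N\le G'N$, $|M_N/N|\le c(m)$, $G'N/M_N$ cyclic, and $M_{N_1}\le M_{N_2}$ whenever $N_1\subseteq N_2$. Put $M_0=\bigcap_N M_N$. One then checks that $M_0\le G'$ (because $G'=\bigcap_N G'N$), that $M_0$ is normal in $G$, that the uniform bound $|M_0/(M_0\cap N)|\le c(m)$ forces $M_0$ to be finite of $m$-bounded order, and that $G'/M_0$ embeds into the inverse limit of the finite cyclic groups $G'/(M_N\cap G')$ and so is procyclic. (Incidentally this re-derives the relevant half of Theorem~\ref{pro3}.)

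It remains to replace $M_0$ by a characteristic subgroup. I would set $M=\bigl\langle\,\sigma(M_0):\sigma\in\operatorname{Aut}(G)\,\bigr\rangle$, the closed subgroup generated by all images of $M_0$ under continuous automorphisms of $G$. This $M$ is visibly characteristic, it is contained in $G'$, and, since $M_0\le M$, the quotient $G'/M$ is a continuous epimorphic image of the procyclic group $G'/M_0$ and so is procyclic. The only real point is the bound on $|M|$: every $\sigma(M_0)$ is finite of order dividing $|M_0|$, hence its image in $G'/M_0$ is a finite cyclic subgroup of order dividing $|M_0|$, and since the subgroup lattice of a procyclic pro-$p$ group is a chain, the closed subgroup of $G'/M_0$ generated by all of these images still has order dividing $|M_0|$. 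Thus $|M|\le|M_0|^2$ is $m$-bounded and $M$ has all the required properties.

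I expect the passage from ``normal'' to ``characteristic'' to be the hard part. The inverse-limit construction inevitably produces only a normal subgroup: characteristic subgroups behave badly under the epimorphisms of the system, and a continuous action of $\operatorname{Aut}(G)$ on the profinite space $\varprojlim\mathcal{T}_N$ need not have a fixed point. The repair above works only because the covering hypothesis has already forced the defect $G'/M_0$ to be procyclic, so that replacing $M_0$ by its automorphic closure costs merely a bounded factor; stripped of the structural information obtained in the previous step, this last move would collapse.
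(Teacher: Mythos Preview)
The paper does not contain a proof of Theorem~\ref{12}; the result is simply quoted from \cite{FAMS}. So there is no in-paper argument to compare against, and I can only assess the correctness of your proposal.

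Your argument is correct. The inverse-limit step is routine and the details you sketch go through: the sets $\mathcal{T}_N$ are non-empty by Theorem~\ref{2}, they form an inverse system of non-empty finite sets under the canonical maps, and from a coherent choice one extracts a finite normal $M_0\le G'$ of order at most $c(m)$ with $G'/M_0$ procyclic. (In fact $G'/M_0$ is \emph{equal} to $\varprojlim G'N/M_N$, not merely embedded in it, since each $M_N\cap G'$ is open in $G'$ and their intersection is $M_0$; but an embedding already suffices, as closed subgroups of procyclic groups are procyclic.)

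The passage from $M_0$ to a characteristic $M$ is also right, and is indeed the point that needs an idea. The key fact you use---that in a procyclic group the closed subgroups of each Sylow pro-$p$ component form a chain---amounts to saying that the elements of order dividing $d=|M_0|$ in $G'/M_0$ form a subgroup of order dividing $d$. All the images $\sigma(M_0)M_0/M_0$ therefore lie in that single subgroup, whence $|M|\le |M_0|^2\le c(m)^2$. A minor simplification: you need not observe that these images are cyclic; only the bound on their order is used.
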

\begin{theorem}\label{13} If $G$ is a pro-$p$ group in which all commutators are covered by $m$ procyclic subgroups, then $G'$ is either finite of $m$-bounded order or procyclic.
\end{theorem}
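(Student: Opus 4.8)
The natural first move is to invoke Theorem~\ref{12}. Applied to the pro-$p$ group $G$, it yields a finite characteristic subgroup $M\le G'$ with $|M|$ being $m$-bounded and $G'/M$ procyclic. Since $G$ is pro-$p$, $M$ is a finite $p$-group and $G'/M$ is a procyclic pro-$p$ group, so $G'/M$ is isomorphic either to a finite cyclic $p$-group or to $\mathbb{Z}_p$. Replacing each of the $m$ covering procyclic subgroups $C_i$ by $C_i\cap G'$ (still procyclic, and still covering the set of commutators), we may assume that the covering subgroups lie in $G'$. What must be proved is that if $G'$ is not procyclic, then $|G'|$ is $m$-bounded.

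I would argue by induction on $|M|$; this is legitimate because $|M|$ is $m$-bounded, so the induction terminates after an $m$-bounded number of steps. If $M=1$ then $G'=G'/M$ is procyclic and there is nothing to prove. Suppose $M\ne 1$. Being a nontrivial finite normal $p$-subgroup of the pro-$p$ group $G$, $M$ meets $Z(G)$ nontrivially, so we may choose $z\in M\cap Z(G)$ with $z^p=1$. In $\bar G=G/\langle z\rangle$ the set of commutators is still covered by $m$ procyclic subgroups, $\bar G'=G'/\langle z\rangle$, and $M/\langle z\rangle$ is a finite normal subgroup of $\bar G$ contained in $\bar G'$, of order $|M|/p$, with $\bar G'/(M/\langle z\rangle)\cong G'/M$ procyclic. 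By the inductive hypothesis $\bar G'$ is either procyclic or finite of $m$-bounded order; in the latter case $G'$ is finite, of order $p\cdot|\bar G'|$, hence of $m$-bounded order and we are done. So we are reduced to the case in which $G'/\langle z\rangle$ is procyclic. Since $\langle z\rangle$ is central in $G'$ and $G'/\langle z\rangle$ is abelian, $G'$ itself is abelian, and an abelian profinite $p$-group possessing a procyclic subgroup of index $p$ is either procyclic or the direct product of a procyclic pro-$p$ group with $C_p$. In the first case we are done; so the only surviving possibilities are $G'\cong C_{p^{a}}\times C_p$ and $G'\cong\mathbb{Z}_p\times C_p$, and it remains to show that the covering hypothesis excludes the second and forces $a$ to be $m$-bounded in the first.

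This last step is the main obstacle, and it is here that the covering of the set of \emph{commutators}, rather than of all of $G'$, has to be used quantitatively. In the surviving case $G'$ is abelian, so the commutator identities are tractable: the set $K$ of commutators of $G$ is closed under conjugation and, modulo $\gamma_3(G)$, under taking $p$-th powers, and it topologically generates $G'$. I would first rule out $G'\cong\mathbb{Z}_p\times C_p$ using Theorem~\ref{11}: here $\pi(G')=\{p\}$, so a profinite group with this $\pi$-set that is covered by finitely many procyclic subgroups must be procyclic, and one shows that the ``density'' of $K$ in $G'$ (it generates $G'$ and is closed under the above operations) forces $G'$, or at least a subgroup of $G'$ still isomorphic to $\mathbb{Z}_p\times C_p$, to be covered by finitely many procyclic subgroups — a contradiction. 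The case $G'\cong C_{p^{a}}\times C_p$ I would treat by passing to the finite quotients $G/N$, $N$ open normal in $G$: each is a finite $p$-group whose commutators are covered by $m$ cyclic subgroups, and one invokes the finite counterpart of the statement — a finite $p$-group whose commutators are covered by $m$ cyclic subgroups has $G'$ either cyclic or of $m$-bounded order — which is obtained by refining the arguments behind Theorem~\ref{2} together with the fact recalled at the beginning of Section~2 that a noncyclic finite $p$-group covered by $m$ cyclic subgroups has $m$-bounded order; the point is that when $G'$ is noncyclic the conjugation- and near-power-closed set $K$, which generates $G'$, is forced to be so spread out that it cannot be squeezed into $m$ cyclic subgroups unless $|G'|$ is $m$-bounded. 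Taking the inverse limit over $N$ then gives that $G'$ is procyclic, if $G'N/N$ is cyclic for cofinally many $N$, and finite of $m$-bounded order otherwise. The delicate point throughout, and the one I expect to demand genuine work, is precisely this estimate on how little room the commutator set of a pro-$p$ group with noncyclic $G'$ leaves, combining the recursive layered structure of finite $p$-groups with the closure properties of $K$, especially when $G$ is not nilpotent of class $2$.
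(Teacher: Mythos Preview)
The paper is a survey and does not itself prove Theorem~\ref{13}; the result is quoted from \cite{FAMS} without argument. So there is no proof in the paper to compare against, and I can only assess your proposal on its own merits.

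Your opening reduction is sensible: invoke Theorem~\ref{12}, and by an induction (which should really be formulated on the \emph{minimal} order of a finite $G$-normal subgroup $N\le G'$ with $G'/N$ procyclic, since applying Theorem~\ref{12} to $G/\langle z\rangle$ need not return $M/\langle z\rangle$) reduce to the case where $G'$ is abelian and isomorphic to $\mathbb{Z}_p\times C_p$ or $C_{p^a}\times C_p$. That part is fine.

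The genuine gap is the final step, which you yourself call ``the main obstacle'' and ``the one I expect to demand genuine work''. What you write there is not a proof but a wish-list. For $G'\cong\mathbb{Z}_p\times C_p$ you appeal to Theorem~\ref{11}, but that theorem needs the \emph{whole} group covered by finitely many procyclic subgroups; you only have the set of commutators so covered, and the claim that its ``density'' forces a finite procyclic covering of some copy of $\mathbb{Z}_p\times C_p$ is entirely unsubstantiated---the commutator set can be thin in $G'$. For $G'\cong C_{p^a}\times C_p$ you invoke a ``finite counterpart of the statement'', i.e.\ the finite-$p$-group case of the very theorem you are proving; that is circular unless proved independently, and you do not prove it. All of the content that distinguishes the pro-$p$ Theorem~\ref{13} from the general profinite Theorem~\ref{12} lives in exactly this step, so as written the proposal reduces the theorem essentially to itself. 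A real argument here has to exploit the $p$-group structure concretely---for instance by analysing how a single procyclic subgroup of a $2$-generated, non-procyclic abelian pro-$p$ group can intersect the set of commutators of $G$, and showing that $m$ such subgroups cannot absorb all commutators once $|G'|$ exceeds an $m$-bounded threshold.
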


Several observations come to mind in the context of the above results. We remark that in the case of pro-$p$ groups there is a clear way to distinguish between the groups in which the commutators are covered by finitely many procyclic subgroups and those in which the  covering requires infinitely many ones. This is not so in the profinite case. Based on our previous experience, we make the following conjecture.
\begin{conjecture}\label{15} Let $G$ be a profinite group whose commutator subgroup is finite-by-procyclic. The commutators in $G$ are covered by finitely many procyclic subgroups if and only if $G'$ is a product of a normal finite subgroup $M$ and a procyclic subgroup $H$ such that $\pi(M)\cap\pi(H)=\emptyset$.
\end{conjecture}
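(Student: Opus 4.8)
\medskip

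I would separate the two implications. The backward one is immediate: if $G'=MH$ with $M$ a finite normal subgroup of $G$, $H$ procyclic and $\pi(M)\cap\pi(H)=\emptyset$, then $M$ is in particular normal in $G'$, so Theorem \ref{11} applied to the profinite group $G'$ (in the role of $G$) shows that $G'$ is covered by finitely many procyclic subgroups; since all commutators of $G$ lie in $G'$, they are covered by finitely many procyclic subgroups.

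The whole difficulty therefore lies in the forward implication, which---invoking Theorem \ref{11} for $G'$ once more---is equivalent to the following: \emph{if the set of commutators of $G$ is covered by $m$ procyclic subgroups, then $G'$ itself is covered by $m$-boundedly many procyclic subgroups}. So the task is to show that a procyclic covering of the generating set of commutators propagates to the generated subgroup $G'$. My plan is to start from Theorem \ref{12}, which supplies a finite characteristic subgroup $M_0\le G'$ of $m$-bounded order with $G'/M_0$ procyclic, and then to upgrade $M_0$ to a subgroup $M$ for which $G'/M$ is a procyclic $\pi$-group with $\pi\cap\pi(M)=\emptyset$, where $\pi$ consists of the primes whose Sylow subgroups in $G'/M_0$ are ``large''. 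If this is achieved, then $M$ is finite of $m$-bounded order, normal in $G$ (a characteristic subgroup of $G'$), and prime-disjoint from $G'/M$, so the profinite Schur--Zassenhaus theorem produces a procyclic complement $H$, giving $G'=MH$ with $\pi(M)\cap\pi(H)=\emptyset$.

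To obtain that $\pi$-decomposition I would descend to finite quotients and then pass to the inverse limit. For each open normal $N\trianglelefteq G$ the group $G/N$ is finite and its commutators lie in $m$ cyclic subgroups; I would want to conclude that $(G/N)'$ is a product of a normal subgroup $\overline{M}_N$ of order bounded in terms of $m$ alone and a cyclic subgroup $\overline{H}_N$ with $(|\overline{M}_N|,|\overline{H}_N|)=1$, and moreover that $\overline{M}_N$ can be taken of a uniform characteristic shape, say $\overline{M}_N=O_{\pi_N'}((G/N)')$ for a uniformly defined set of primes $\pi_N$, so that the projections $(G/N_1)'\twoheadrightarrow(G/N_2)'$ (for $N_1\le N_2$) carry $\overline{M}_{N_1}$ onto $\overline{M}_{N_2}$. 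Then $M:=\varprojlim \overline{M}_N$ is finite of $m$-bounded order and characteristic in $G'$, the quotient $G'/M$ is a procyclic $\pi$-group, and the previous paragraph finishes the argument.

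The main obstacle is exactly the finite assertion just used, which is Problem \ref{3}. Its difficulty is that the hypothesis controls only the commutators and not all elements of $G'$, so one cannot simply reuse the Sylow-theoretic argument from the proof of Theorem \ref{1}, where \emph{every} element of the group lay in one of the covering subgroups and hence every large Sylow subgroup was automatically cyclic. Instead one would have to show that whenever a Sylow $p$-subgroup of $G'$ is large relative to the bound of Theorem \ref{2} there occur enough commutators of $p$-power order to force that Sylow subgroup to be cyclic, and then run the automorphism argument ($T=[T,\alpha]$ or $\alpha=1$ for a $p'$-automorphism $\alpha$ of a cyclic $p$-group $T$) to collect the remaining ``small-Sylow'' primes into a normal subgroup of $m$-bounded order; combined with Theorem \ref{2} this should yield the required coprime decomposition of $G'$. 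This is the step I expect to demand the most work, and it is essentially the reason the statement is at present only a conjecture.
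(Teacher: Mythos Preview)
The statement in question is Conjecture~\ref{15}, and the paper offers no proof of it; it is presented precisely as an open conjecture. There is therefore nothing in the paper to compare your argument against.

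That said, your structural analysis is accurate. The backward implication is indeed immediate from Theorem~\ref{11} applied to the profinite group $G'$, exactly as you say. For the forward implication, your plan---start from the finite characteristic subgroup $M_0$ supplied by Theorem~\ref{12}, pass to finite quotients, seek a coprime splitting there, and take an inverse limit with compatible choices of $\overline{M}_N$---is the natural route, and you correctly isolate the genuine obstruction: the finite-group statement you need is precisely Problem~\ref{3}, which the paper records as open. Your closing remark that this unresolved step ``is essentially the reason the statement is at present only a conjecture'' is exactly right.

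So your proposal is not a proof, nor do you claim it to be one; it is a correct diagnosis of why Conjecture~\ref{15} remains a conjecture and of how a proof would have to proceed once Problem~\ref{3} is settled.
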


A noteworthy fact that can be easily deduced from Theorem \ref{pro3} is that if the set of commutators in a pro-$p$ group $G$ is contained in a union of countably many procyclic subgroups of $G'$, then at least one of the subgroups is open in $G'$.

Finally, we remark that if the set of commutators in a profinite group $G$ is contained in a union of countably many procyclic subgroups, then the entire commutator subgroup $G'$ admits a covering by countably many procyclic subgroups. This follows from Theorems \ref{pro3} and \ref{10}. But of course we cannot claim that the family of procyclic subgroups that covers the set of all commutators is necessarily the same as the one that covers $G'$.

%%%%%%%%%%%%%%%%%%%%%%%%%%%%%%%%%%%%%%%%%%%%%%%%%%%%%%%% 
 %%%%%%%%%%%%%%%%%%%%%    NILPOTENT    %%%%%%%%%%%%%%%%%%%%%%%%%%%%%%%%%%%%%%%%%%%%%%%%%%%%%%%%%%%%%%%%%%%%%%%%%%%%%%%%%%%%%%%%
\section{Coverings by abelian, nilpotent, and soluble subgroups}

 \subsection*{On groups covered by few nilpotent subgroups}

Suppose that a group $G$ is covered by finitely many abelian subgroups. In view of Neumann's lemma we can assume that each of the subgroups has finite index. Therefore their intersection has finite index, too. It is clear that the intersection lies in the center of $G$ and therefore the group $G$ is central-by-finite. On the other hand, any group $G$ is covered by the abelian subgroups of the form $\langle g,Z(G)\rangle$, where $g\in G$. Therefore if the center $Z(G)$ has finite index in $G$, then $G$ is covered by finitely many abelian subgroups.
Thus, a group $G$ admits a finite covering by abelian subgroups if and only if $G$ is central-by-finite. This observation is due to Baer (cf \cite[4.16]{Robinson}).

Let $Z_i(G)$ denote the $i$th term of the upper central series of $G$. It is easy to check that the subgroup $\langle g,Z_i(G)\rangle$ is nilpotent of class at most $i$ for any $g\in G$. Therefore any group for which there exists $m$ such that $Z_m(G)$ has finite index is covered by finitely many nilpotent subgroups. In 1992 M.\,J.\ Tomkinson proved the converse: if a group $G$ admits a finite covering by nilpotent subgroups, then there exists some positive integer $m$ such that $Z_{m}(G)$ has finite index in $G$ \cite{Tom}. In view of Hall's theorem \cite{Hall} it follows that a group $G$ has a finite covering by nilpotent subgroups if and only if $G$ is finite-by-nilpotent. 
 
The above results apply to both abstract and profinite groups. Recently it was discovered that in the case of profinite groups similar results hold even with finite coverings replaced by countable ones. The following theorems were established in \cite{Sh2}.
 
 \begin{theorem}
 \label{nilp1}
 For a profinite group $G$ the following conditions are equivalent.
 \begin{itemize}
\item [(1)] The group $G$ is covered by countably many abelian subgroups;
 \item[(2)] The group $G$ has finite commutator subgroup;
\item [(3)] The group $G$ is central-by-finite.
 \end{itemize}
 \end{theorem}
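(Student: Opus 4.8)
I would handle the easy implications $(3)\Rightarrow(2)$ and $(3)\Rightarrow(1)$ first. If $Z(G)$ is open it has finite index, so Schur's theorem gives that $G'$ is finite; and if $g_1,\dots,g_m$ represent the cosets of $Z(G)$, then $G=\bigcup_{k=1}^{m}\langle g_k,Z(G)\rangle$ with each $\langle g_k,Z(G)\rangle$ abelian, so $G$ is in fact a finite union of abelian subgroups.

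For $(2)\Rightarrow(3)$ the idea is to first manufacture an open abelian subgroup sitting inside $C_G(G')$, and then cash in the finiteness of $G'$. Put $C=C_G(G')$; since $G'$ is finite, $\mathrm{Aut}(G')$ is finite, so $C$ has finite index and is open. The commutator map $C\times C\to C'$ is continuous with finite (discrete) image, hence $[W,W]=1$ for some open subgroup $W\le C$; thus $W$ is open, abelian, and contained in $C_G(G')$. Now for each $g\in G$ the map $w\mapsto[w,g]$ is a homomorphism from $W$ into $G'$ — this step uses that $W$ centralizes $G'$ — and its kernel $C_W(g)$ is open in $W$ because $G'$ is finite. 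Taking coset representatives $g_1,\dots,g_m$ of $W$ in $G$, one checks that $Z(G)\cap W=\bigcap_{k=1}^{m}C_W(g_k)$, a finite intersection of open subgroups of $W$; hence $Z(G)$ is open.

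The substantive part is $(1)\Rightarrow(3)$. Suppose $G=\bigcup_{i\in\mathbb N}A_i$ with every $A_i$ a closed abelian subgroup. Baire's category theorem gives that some $A_i$ is open; call it $A_0$, and let $m=[G:A_0]$ with coset representatives $g_1,\dots,g_m$. A one-line computation shows $C_{A_0}(ag_k)=C_{A_0}(g_k)$ for all $a\in A_0$, so this centralizer depends only on the coset, and consequently $Z(G)\cap A_0=\bigcap_{k=1}^{m}C_{A_0}(g_k)$; it therefore suffices to show that each $C_{A_0}(g_k)$ is open. To this end I would cover the coset $A_0g_k$ by the members $A_i$ that meet it: each such $A_i$ intersects $A_0g_k$ in a coset of $A_i\cap A_0$, and since $A_i$ is abelian and contains an element of $A_0g_k$ we obtain $A_i\cap A_0\le C_{A_0}(g_k)$. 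Pulling back by $g_k^{-1}$, the profinite group $A_0$ is covered by \emph{countably many} cosets of the closed subgroups $A_i\cap A_0$, and every one of these subgroups lies in $C_{A_0}(g_k)$. A second application of Baire's theorem forces one of those cosets to have non-empty interior, so the corresponding subgroup $A_i\cap A_0$ is open; hence $C_{A_0}(g_k)$ contains an open subgroup and is itself open.

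The main obstacle is precisely this last maneuver. Baire's theorem by itself only produces an open abelian subgroup, which is much too weak a conclusion: for instance $\mathbb Z_p\rtimes C_2$, with $C_2$ acting by inversion, has an open abelian subgroup of index $2$ yet has trivial centre (and, consistently, admits no countable covering by abelian subgroups, since its nontrivial coset consists of uncountably many pairwise non-commuting involutions). What makes the argument work is that the covering of $A_0$ induced inside each coset $A_0g_k$ is again \emph{countable} — this is exactly where the countability hypothesis of $(1)$ is spent — so Baire's theorem can be invoked a second time; checking that this induced covering is genuinely one by countably many cosets of closed subgroups is the point requiring care.
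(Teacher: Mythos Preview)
Your argument is correct in all three implications. The double application of Baire's theorem in $(1)\Rightarrow(3)$ is carried out cleanly: the key point, which you identify and verify, is that the induced covering of each coset $A_0g_k$ translates to a countable covering of $A_0$ by closed cosets of subgroups contained in $C_{A_0}(g_k)$, so a second invocation of Baire forces $C_{A_0}(g_k)$ to be open. The auxiliary computations ($C_{A_0}(ag_k)=C_{A_0}(g_k)$ for $a\in A_0$, and $A_i\cap A_0\le C_{A_0}(g_k)$ whenever $A_i$ meets $A_0g_k$) are correct, and your illustrative example $\mathbb{Z}_p\rtimes C_2$ correctly isolates why a single use of Baire is insufficient.

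However, there is nothing to compare your proof against: the present paper is a survey and does not supply a proof of this theorem, referring instead to \cite{Sh2}. The only hint the paper gives is the remark that the proofs in \cite{Sh2} ``do not use the corresponding results on finite coverings of abstract groups''; your argument is consistent with that description, since it works directly with the countable covering and Baire's theorem rather than reducing to the finite (Neumann--Baer) case.
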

 
\begin{theorem}
 \label{nilp2}
 For a profinite group $G$ the following conditions are equivalent.
 \begin{itemize}
\item [(1)] The group $G$ is covered by countably many nilpotent subgroups;
 \item[(2)] The group $G$ is finite-by-nilpotent;
\item [(3)] There exists a positive integer $m$ such that $Z_{m}(G)$ is open in $G$.
 \end{itemize}
 \end{theorem}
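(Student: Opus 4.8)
I would first dispose of the two easy links in the chain $(1)\Rightarrow(2)\Leftrightarrow(3)\Rightarrow(1)$. The equivalence $(2)\Leftrightarrow(3)$ is Hall's theorem \cite{Hall} read topologically, using that a closed subgroup of a profinite group is open precisely when it has finite index: if $Z_m(G)$ is open, then $\gamma_{m+1}(G)$ is finite and $G/\gamma_{m+1}(G)$ is pronilpotent, so $G$ is finite-by-nilpotent; conversely, if $\gamma_{c+1}(G)$ is finite, then $Z_{2c}(G)$ is open. For $(3)\Rightarrow(1)$, choose $g_1,\dots,g_n$ representing the finitely many cosets of the open subgroup $Z_m(G)$; then $G=\bigcup_{j=1}^{n}\langle g_j,Z_m(G)\rangle$, and each subgroup $\langle g_j,Z_m(G)\rangle$ is open (it contains $Z_m(G)$) and nilpotent of class at most $m+1$: for any $H$ with $Z_m(G)\le H\le G$, the $m$-fold commutator $[Z_m(G),H,\dots,H]$ is contained in $[Z_m(G),G,\dots,G]=1$, so $Z_m(G)\le Z_m(H)$, while $H/Z_m(G)$ is procyclic. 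Thus $G$ is covered by finitely many nilpotent subgroups.

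The real content is $(1)\Rightarrow(3)$. Write $G=\bigcup_{i\ge1}N_i$ with each $N_i$ a closed nilpotent subgroup. By Baire's category theorem one of the $N_i$ is open, and then its normal core $K=\bigcap_{g\in G}N_i^{g}$ is open, normal in $G$, and nilpotent (being a closed subgroup of $N_i$); let $c$ be its nilpotency class. I would argue by induction on $c$. If $c=0$, then $K=1$, so $G$ is finite and $(3)$ is trivial. If $c\ge1$, put $Z=Z(K)$: this is a nontrivial closed subgroup, characteristic in $K$ and hence normal in $G$. In $\bar G=G/Z$ the image $\bar K=K/Z$ is open, normal, and nilpotent of class at most $c-1$, while $\bar G=\bigcup_i\bar N_i$ with $\bar N_i=N_iZ/Z$ still nilpotent; so the inductive hypothesis applies to $\bar G$ and shows that $\bar G$ is finite-by-nilpotent, i.e.\ $\gamma_{r+1}(\bar G)$ is finite for some $r$. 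Equivalently, writing $R=\gamma_{r+1}(G)$, the subgroup $A:=R\cap Z(K)$ has finite index in $R$.

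It remains to conclude that $R=\gamma_{r+1}(G)$ is finite; once this is known, $G/\gamma_{r+1}(G)$ is pronilpotent and $(2)$, hence $(3)$, holds. This is where the covering must be exploited a second time. The subgroup $A$ is abelian, normal in $G$, of finite index in the commutator subgroup $R=\gamma_{r+1}(G)$, and it is centralized by the open subgroup $K$; in particular every element of $A$ has at most $[G:K]$ conjugates in $G$. The crux, which I expect to be the main obstacle, is to deduce from this that $A$ is finite: it does not suffice to know via Baire that one covering subgroup is open. I would go back to the covering $G=\bigcup_iN_i$ and use that each $N_i$ is nilpotent and meets $K$ in a subgroup of finite index, together with the uniform bound $[G:K]$ on the sizes of conjugacy classes inside $A$, to show that a suitable section of $G$ is covered by countably many \emph{abelian} subgroups. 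Theorem~\ref{nilp1} then forces that section to be central-by-finite, which makes $A$, and therefore $\gamma_{r+1}(G)$, finite, completing the induction.
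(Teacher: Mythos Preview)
The survey does not actually prove Theorem~\ref{nilp2}; it only states the result and refers to \cite{Sh2}, remarking that the proof there ``does not use the corresponding results on finite coverings of abstract groups''. So there is no proof in the present paper to compare your attempt against; what follows is an assessment of your argument on its own merits.

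Your handling of $(2)\Leftrightarrow(3)$ and $(3)\Rightarrow(1)$ is fine (the class of $\langle g,Z_m(G)\rangle$ is in fact at most $m$, as the paper notes, but this is irrelevant). The substance is in $(1)\Rightarrow(2)$, and there your argument has a real gap. After the induction on the class of the open normal nilpotent core $K$ you are reduced to showing that $A=\gamma_{r+1}(G)\cap Z(K)$ is finite, and at this point you only say that you ``would go back to the covering $G=\bigcup_iN_i$'' and produce ``a suitable section of $G$'' covered by countably many abelian subgroups so that Theorem~\ref{nilp1} applies. No such section is exhibited, and this is exactly the crux of the proof, not a routine step.

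To see that it cannot be a formality, take $G=\prod_{n\ge1}S_3$ and $K=\prod_{n\ge1}C_3$. Then $K$ is open, normal and abelian, $Z(K)=K$, and for $r=1$ one gets $A=\gamma_2(G)\cap K=K$, which is infinite. Every structural fact you have isolated about $A$---abelian, normal in $G$, centralised by the open subgroup $K$, of finite index in $\gamma_{r+1}(G)$, every element having at most $[G:K]$ conjugates---holds in this example. What fails is the covering hypothesis itself: $\prod S_3$ is \emph{not} a union of countably many nilpotent closed subgroups. Hence the finiteness of $A$ cannot be deduced from the data you have recorded; the countable nilpotent covering has to be used again in a substantive way, beyond the single application of Baire that produced $K$. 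Your outline gives no indication of which section to take or how the $N_i$ would yield an abelian covering of it, so as it stands the implication $(1)\Rightarrow(2)$ is not established.
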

 
The proofs of Theorems \ref{nilp1} and \ref{nilp2} in \cite{Sh2} do not use the corresponding results on finite coverings of abstract groups. As a consequence of Theorems \ref{nilp1} and \ref{nilp2} we can observe that a profinite group $G$ admits a countable covering by abelian (respectively, nilpotent) subgroups if and only if $G$ admits a finite covering by subgroups with the respective property. 
 
\bigskip

\subsection*{On groups in which commutators are covered by countably many nilpotent subgroups}

Profinite groups that are covered by countably many nilpotent (or abelian) subgroups are understood reasonably well. Now it is time to have a look at groups in which commutators are covered by countably many nilpotent subgroups. The following theorem was obtained in \cite{Sh2}.
 
\begin{theorem}
\label{nilp3} Let $G$ be a profinite group. The following conditions are equivalent.
 \begin{itemize}
\item [(1)] The set of all commutators in $G$ is covered by countably many nilpotent subgroups;
\item [(2)]  The commutator subgroup $G'$ is finite-by-nilpotent;
\item [(3)]  There exists a positive integer $m$ such that $Z_{m}(G')$ is open in $G'$.
 \end{itemize} 
 \end{theorem}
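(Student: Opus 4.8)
The plan is to dispatch the two easy implications first and then concentrate all the effort on $(1)\Rightarrow(2)$.

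For $(2)\Leftrightarrow(3)$ the covering hypothesis plays no role: this is the classical fact, due to P.\ Hall \cite{Hall} together with its partial converse, that for any profinite group $K$ one has $\gamma_{c+1}(K)$ finite for some $c$ if and only if $Z_m(K)$ has finite index in $K$ for some $m$; applied to $K=G'$ this is precisely $(2)\Leftrightarrow(3)$, once one recalls that a closed subgroup of finite index in a profinite group is open. For $(2)\Rightarrow(1)$ I would apply Theorem~\ref{nilp2} to the profinite group $G'$ itself: if $G'$ is finite-by-nilpotent then $G'$ is a union of countably many nilpotent subgroups, and since every commutator of $G$ lies in $G'$ the same family covers the set of all commutators.

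For $(1)\Rightarrow(2)$, assume the set $G_{\gamma_2}$ of all commutators of $G$ is contained in $\bigcup_{i\ge1}N_i$ with each $N_i$ nilpotent. The starting point is that $G_{\gamma_2}$ is closed, being the image of the compact space $G\times G$ under the continuous commutator map; hence $G_{\gamma_2}=\bigcup_i(N_i\cap G_{\gamma_2})$ is a countable union of closed subsets of the compact Hausdorff space $G_{\gamma_2}$, and Baire's category theorem provides a commutator $g_0=[a,b]$, an open normal subgroup $H\trianglelefteq G$ and an index $i$ with $g_0H\cap G_{\gamma_2}\subseteq N:=N_i$. Using the commutator identities $[u,vw]=[u,w][u,v]^w$ and $[uv,w]=[u,w]^v[v,w]$ together with the inclusion $[G,H]\le H$ (valid because $H$ is normal), I would check that $[ax,by]\in g_0H$ for all $x,y\in H$; being commutators, these elements all lie in $N$. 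Thus the whole ``box'' $B=\{[ax,by]:x,y\in H\}$ sits inside the nilpotent subgroup $N$, so $\langle B\rangle$ is nilpotent; moreover $\langle B\rangle$ contains $g_0$ and, after a little more commutator calculus, many elements of $N\cap H$.

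The core of the proof, and the step I expect to be hardest, is to convert this local information into the global conclusion that $\gamma_{c+1}(G')$ is finite. The ingredients I would use are: (a) the elementary observation that a subset of $N$ which is invariant under conjugation by an open subgroup $K$ generates a closed subgroup lying inside $\bigcap_{k\in K}N^k$, and that this intersection is a $K$-invariant nilpotent subgroup -- this is what lets one replace ``nilpotent'' by ``normal nilpotent''; (b) a repetition of the Baire argument over the finitely many cosets of $H$ in $G$: for each ordered pair of such cosets the corresponding set of commutators is again closed, so one obtains, for each pair, a nilpotent subgroup containing a box of commutators coming from that pair; and (c) the fact that, since $G/H$ is finite, $G'$ is topologically generated by these finitely many boxes together with their finitely many $G/H$-conjugates and a ``small'' remainder. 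Combining (a)--(c) one should be able to produce a normal nilpotent subgroup, of an open subgroup of $G$, that lies inside $G'$; the remaining -- and genuinely delicate -- point is to leverage this, together with the action of the finite group $G/H$, into the finiteness of $\gamma_{c+1}(G')$. Here one must be careful: for a general profinite group, having an open nilpotent subgroup does not imply being finite-by-nilpotent (the infinite pro-dihedral group has an open procyclic subgroup yet is not finite-by-nilpotent), so the bookkeeping must track precisely how the nilpotent pieces and the finite quotient $G/H$ interact. Once $\gamma_{c+1}(G')$ is shown to be finite, the theorem follows by the equivalence $(2)\Leftrightarrow(3)$ already established.
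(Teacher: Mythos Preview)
The paper you are working from is a survey; it states Theorem~\ref{nilp3} and attributes it to \cite{Sh2} but does not supply a proof, so there is no argument in the paper to compare against.  I can therefore only comment on your proposal on its own terms.

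Your handling of the two easy implications is fine: $(2)\Leftrightarrow(3)$ is indeed Hall's theorem applied to $K=G'$, and $(2)\Rightarrow(1)$ follows at once from Theorem~\ref{nilp2} applied to $G'$.

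The implication $(1)\Rightarrow(2)$, however, is not proved in your proposal---and you say so yourself.  The Baire step is carried out correctly and the ``box'' $\{[ax,by]:x,y\in H\}$ really does sit inside a single nilpotent subgroup $N$.  But from that point on you only describe a strategy: intersect conjugates to get something normal in an open subgroup, repeat over coset pairs, assemble.  You then write that ``the remaining---and genuinely delicate---point is to leverage this \dots\ into the finiteness of $\gamma_{c+1}(G')$'', and you correctly flag the pro-dihedral obstruction showing that an open nilpotent subgroup alone is not enough.  That delicate point is the entire content of the theorem; everything before it is preparation.  In particular, step~(c) (``$G'$ is topologically generated by these finitely many boxes together with their finitely many $G/H$-conjugates and a `small' remainder'') is asserted without justification, and it is not clear why the subgroup so generated should be finite-by-nilpotent rather than merely virtually nilpotent---which, as you note, is strictly weaker.

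So as it stands the proposal has a genuine gap: the heart of $(1)\Rightarrow(2)$ is missing, not just sketched.  To close it you would need either a concrete mechanism that forces a suitable term of the lower central series of $G'$ to be finite (not merely of finite index), or an appeal to the argument in \cite{Sh2}.
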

Remark that no analogue of the above theorem for abstract groups is known. Thus, the next problem is very natural.
   
\begin{problem}\label{232} Let $G$ be an abstract group in which the  commutators are contained in a union of finitely many nilpotent subgroups. Is $G'$ finite-by-nilpotent? 
\end{problem}

Actually, the particular case of the above problem for covering by abelian subgroups was raised in \cite{FAS}:
 
\begin{problem}\label{233} Let $G$ be an abstract group in which the  commutators are contained in a union of finitely many abelian subgroups. Is the second commutator subgroup $G''$ finite? 
\end{problem}

Further, it is  natural to ask whether in case of abelian subgroups Theorem \ref{nilp1} can be made more specific.
\begin{problem} Let $G$ be a profinite group in which the commutators are covered by countably many abelian subgroups. Is the second commutator subgroup $G''$ necessarily finite?
\end{problem}
Finally, we formulate the related question for arbitrary multilinear commutator words.
\begin{problem} Let $w$ be a  multilinear commutator word and $G$ a profinite group in which the $w$-values are covered by countably many nilpotent subgroups. Is $w(G)$ finite-by-nilpotent? 
\end{problem}

\bigskip
 \subsection*{Coverings by soluble subgroups}

Suppose a group $G$ is covered by finitely many soluble subgroups. By Neumann's lemma $G$ must be virtually soluble. On the other hand, suppose that $G$ has a normal soluble subgroup $N$ of finite index. Then $G$ is covered by finitely many soluble subgroups of the form $\langle N,g\rangle$, where $g\in G$. Thus, $G$ is covered by finitely many soluble subgroups if and only if $G$ is virtually soluble.
 
The following result was obtained in \cite{AS2}.

\begin{theorem}
\label{cc3} Let  $w$ be a multilinear commutator word and $G$ a profinite group having countably many soluble subgroups whose union contains the set of $w$-values. Then $G$ is virtually soluble. 
\end{theorem}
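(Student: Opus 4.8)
The plan is to use Baire's category theorem to replace the countable covering by a finite covering on an open subgroup, and then to settle that finite situation with the help of the classification of finite simple groups. Three elementary facts about a multilinear commutator word $w$ of weight $n$ will be used throughout. First, the set $G_w$ of $w$-values is a closed, conjugation-invariant subset of $G$, being the continuous image of the compact space $G^n$ under the word map, so Baire's theorem applies to it. Second, $G/w(G)$ satisfies the law $w\equiv1$, and a profinite group satisfying a multilinear commutator law is soluble of derived length bounded in terms of $w$; hence $G/w(G)$ is soluble, and moreover $G^{(m)}\le w(G)\le\gamma_n(G)$ for some $m=m(w)$ (the left inclusion because the relatively free group modulo $w\equiv1$ is soluble). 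Third, it suffices to exhibit \emph{any} open subgroup $V\le G$ that is virtually soluble, since a soluble subgroup of finite index in $V$ then has finite index in $G$.

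The heart of the argument is the reduction to a finite covering. Applying Baire's theorem to $G_w=\bigcup_i(G_w\cap H_i)$ produces an index, say $1$, an open normal subgroup $U$ of $G$, and an element $a\in G_w$, with $G_w\cap aU\subseteq H_1$; conjugation-invariance of $G_w$ then gives the same for every coset $a^xU$, with $H_1^{\,x}$ in place of $H_1$. The task is to bootstrap this — iterating Baire's theorem along a countable descending base of open normal subgroups of $G$ and exploiting the conjugation-invariance of $G_w$ at each step — to the statement that there are an open subgroup $V$ of $G$ and finitely many indices $i_1,\dots,i_r$ such that all $w$-values of $V$ lie in $H_{i_1}\cup\dots\cup H_{i_r}$. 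Granting this, the theorem reduces to the assertion that a profinite group whose $w$-values are covered by finitely many soluble subgroups is virtually soluble.

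For that assertion I would pass to the finite continuous quotients $G/N$ and bound, uniformly in $N$, both the index and the derived length of a soluble normal subgroup of $G/N$, so that the resulting open subgroups of bounded index intersect in an open soluble subgroup of $G$. In a finite group $F$ with $F_w\subseteq S_1\cup\dots\cup S_r$, the $S_i$ soluble, B.H. Neumann's lemma \cite{neumann} together with the structure of finite soluble groups disposes of the ``soluble part'', and it remains to bound $F/R$ with $R$ the soluble radical. Here the classification of finite simple groups enters, via the fact that $w(S)=S$ for every nonabelian simple $S$ and the known lower bounds on $|S_w|$: a set as large, and as conjugation-invariant, as $(F/R)_w$ cannot fit into boundedly many proper soluble subgroups once $F/R$ has too many, or too large, simple composition factors, and a counting estimate turns this into an explicit bound on $|F/R|$; combined with the solubility of $F/w(F)$ this yields the uniform bound. (This is the content of \cite{AS2}.)

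I expect the real obstacle to be the passage from a countable to a finite covering: Baire's theorem by itself only locates one soluble subgroup near one $w$-value, and, as the example $C_p\times\mathbb Z_p$ in the introduction shows, a countable covering of a profinite group need not reduce to a finite one in general — so the argument must use decisively that $G_w$ is not an arbitrary closed set but the conjugation-invariant image of a word map generating $w(G)$. The classification-based step is unavoidable but, given the available machinery on word maps, comparatively routine, and it is there that one sees why the conclusion is merely ``virtually soluble'': a finite nonabelian simple group already has all its $w$-values inside finitely many cyclic subgroups, yet is only trivially virtually soluble.
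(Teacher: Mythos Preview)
Your outline correctly assembles the raw materials --- Baire's category theorem, the solubility of $G/w(G)$ for a multilinear commutator $w$, and (for the finite-covering endgame) input from the classification --- but the step you yourself flag as the ``real obstacle'' is a genuine gap that the proposal does not close. Iterating Baire's theorem along a descending base of open normal subgroups produces one new index $i_k$ at each stage, with no mechanism forcing termination after finitely many steps; and $G_w\setminus H_{i_1}$ is not closed, so one cannot simply re-apply Baire to the remainder. The example $C_p\times\mathbb{Z}_p$ in the introduction is precisely the warning that countable coverings of profinite objects need not collapse to finite ones, and nothing in your sketch uses the word-map structure of $G_w$ sharply enough to force such a collapse here. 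A posteriori the reduction you want is of course true --- once $G$ is known to be virtually soluble, an open soluble subgroup already covers its own $w$-values --- but that is the conclusion of the theorem, not an available intermediate step.

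This survey does not reproduce the proof, referring instead to \cite{AS2}; but observe the sentence immediately following the theorem: that a countable soluble covering of $G_w$ is equivalent to a finite soluble covering of all of $G$ is recorded there as a \emph{surprising consequence} of Theorem~\ref{cc3}, which strongly suggests the argument does not pass through your proposed reduction. The approach that works applies Baire only once, to the word map $G^n\to G$, obtaining $a_1,\ldots,a_n$ and an open normal $N$ with $w(a_1N,\ldots,a_nN)\subseteq H_i$ for a \emph{single} index $i$, and then exploits the multilinearity of $w$ (expanding $w(a_1g_1,\ldots,a_ng_n)$ with $g_j\in N$ via commutator identities) together with the solubility of $H_i$ to force $N$ --- hence $G$ --- to be virtually soluble directly. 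Your classification-based estimate for finite quotients is closer in spirit to how the finite-covering situation is handled, but the countable-to-finite bridge you propose in order to reach it is where the argument breaks.
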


A rather surprising fact that follows from Theorem \ref{cc3} is that in a profinite group the set of $w$-values is covered by countably many soluble subgroups if and only if the whole group $G$ is covered by finitely many soluble subgroups.

 %%%%%%%%%%%%%%%%%%%%%%%%%%%%%%%%%%%%%%%%%%%%%%%%%%%%%%%
%%%%%%%%%%%%%%%%%%%%   TORSION_FINITE RANK     %%%%%%%%%%%%%%%%%%%%%%
%%%%%%%%%%%%%%%%%%%%%%%%%%%%%%%%%%%%%%%%%%%%%%%%%%%%%%%%%
\section{Coverings of commutators by torsion subgroups, or subgroups of finite rank}
\subsection*{Coverings of commutators by torsion subgroups}

A group is periodic (torsion) if each of its cyclic subgroups has finite order. A group is called locally finite if each of its finitely generated subgroups is finite. Periodic profinite groups have received a good deal of attention in the past. In particular, using Wilson's reduction theorem \cite{Wil}, Zelmanov has been able to prove local finiteness of periodic compact groups \cite{Zel}. Earlier Herfort showed that there exist only finitely many primes dividing the orders of elements of a periodic profinite group \cite{her}. It follows that a periodic profinite group has a normal series of finite length each of whose factors is either a locally finite pro-$p$ group or a Cartesian product of isomorphic finite simple groups. It is a long-standing problem whether any periodic profinite group has finite exponent. Recall that a group $G$ has exponent $e$ if $x^e=1$ for all $x\in G$ and $e$ is the least positive integer with that property.

If a profinite group is covered by countably many periodic subgroups, by Baire's category theorem at least one of the subgroups is open and so the whole group is periodic. In the present section we will look at the results that deal with coverings of $w$-values by countably many periodic subgroups. The following theorem was proved in \cite{DMS}.

\begin{theorem}\label{cc1} Let $w$ be a multilinear commutator word and $G$ a profinite group  that has countably many periodic subgroups whose union contains all $w$-values in $G$. Then $w(G)$ is locally finite. 
\end{theorem}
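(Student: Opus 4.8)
The plan is to combine Herfort's theorem on periodic profinite groups, Zelmanov's solution of the restricted Burnside problem, and the key structural results already quoted in the survey — chiefly Theorem \ref{cc3} on coverings by soluble subgroups — with an inverse-limit reduction to finite groups. The first step is to observe that the hypothesis forces $G$ itself to be periodic: by Baire's category theorem one of the countably many periodic covering subgroups is open in $G$, hence of finite index, and a profinite group with an open periodic subgroup is periodic. By Herfort's theorem $\pi(G)$ is finite, and by Zelmanov's theorem $G$ is locally finite; in particular $w(G)$ is a periodic profinite group, and the claim we must prove is that it has finite exponent (for a locally finite profinite group, finite exponent is equivalent to — and the natural way to certify — local finiteness in the usual profinite sense, but more to the point we want a uniform bound). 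So the real content is: the presence of a countable covering of the $w$-values by periodic subgroups yields a uniform exponent bound on the covering subgroups, which propagates to $w(G)$.

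Next I would reduce to the situation where the covering subgroups are not merely periodic but have bounded exponent. Here is where Baire's theorem is used a second time, in a more refined way: write the $i$-th periodic covering subgroup $H_i$ as the union of its sets $H_i[n] = \{h \in H_i : h^n = 1\}$ over $n \geq 1$; each $H_i[n]$ is closed, so the set $G_w$ of $w$-values, being contained in $\bigcup_{i,n} H_i[n]$, is a profinite space covered by countably many closed sets, whence some $H_i[n]$ has non-empty interior in $G_w$ — but to exploit this I actually want to pass to finite quotients. So the cleaner route is the inverse-limit argument: suppose the conclusion fails, so $w(G)$ has infinite exponent; then there is a descending chain of open normal subgroups $N_j$ with $w(G/N_j)$ of exponent tending to infinity. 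In each finite quotient $\bar G = G/N_j$ the $w$-values are covered by the images of the $H_i$, which are finite and hence of bounded exponent, but a priori the number of them and their exponents are unbounded in $j$. To control this one invokes that a finite group generated by $w$-values each of which lies in one of $m$ subgroups of exponent $e$ — combined with Theorem \ref{cc3} giving virtual solubility, and then the coprime-commutator / multilinear-commutator machinery — has order, or at least exponent, bounded in terms of $m$, $e$, and the weight of $w$ only. This is the point where one cites the finite-group analogue of the theorem (the corresponding result of \cite{DMS} for finite groups, proved there via Zelmanov's theorem and CFSG-type reductions) as the engine.

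The heart of the matter, and the step I expect to be the main obstacle, is precisely establishing that \emph{uniform} bound in the finite quotients, i.e. showing that one cannot have finite groups $G_n$ with $w$-values covered by $m$ subgroups of exponent $e$ (fixed $m, e$, and fixed word $w$) while $\exp(w(G_n)) \to \infty$. The difficulty is twofold: first, a multilinear commutator word can have many values not individually equal to a single covering-subgroup element in a controlled way, so one needs the Fernández-Alcober–Shumyatsky type argument that a covering of the $w$-values translates into a covering of a verbal subgroup; second, once $w(G_n)$ is known to be soluble (via Theorem \ref{cc3}) and periodic, one must climb the derived series, at each stage handling sections that are elementary abelian $p$-groups acted on by the rest, and here the bounded number of covering subgroups must be leveraged to bound the ranks and the primes involved, ultimately reducing to a restricted-Burnside situation where Zelmanov's bound applies. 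Assembling these — the reduction of $w$-value coverings to verbal-subgroup coverings, the solubility input, the layer-by-layer rank/prime control, and the final Burnside bound — is the technical core. Once the uniform exponent bound $e' = e'(w, m, e)$ on $w(G_n)$ is in hand, it passes to the inverse limit: $\exp(w(G)) \leq e'$, and a profinite group of finite exponent that is (by Zelmanov) locally finite is what the theorem asserts, completing the proof.
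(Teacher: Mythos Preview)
The paper is a survey and does not itself prove Theorem \ref{cc1}; it merely attributes the result to \cite{DMS}. So there is no in-paper proof to compare against, and I evaluate your proposal on its own merits.

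Your argument has genuine gaps at every stage.

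First, the opening claim that $G$ itself must be periodic is false. The countably many periodic subgroups are assumed to cover only the set of $w$-values, not all of $G$; Baire's theorem applied to $G$ tells you nothing. (For an extreme example take $G=\mathbb{Z}_p$ and $w=[x,y]$: the unique $w$-value is $1$, trivially covered by a periodic subgroup, yet $G$ is torsion-free.) Everything you deduce from this --- Herfort on $\pi(G)$, Zelmanov applied to $G$ --- therefore collapses.

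Second, you repeatedly conflate ``$w(G)$ is locally finite'' with ``$w(G)$ has finite exponent''. The theorem asserts only the former. Whether every periodic profinite group has finite exponent is, as the survey explicitly remarks, a long-standing open problem; so when you write ``suppose the conclusion fails, so $w(G)$ has infinite exponent'' you are negating the wrong statement, and your entire inverse-limit strategy is aimed at a target strictly stronger than what is required (and not known to be true).

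Third, and most seriously, the ``engine'' you invoke --- that in a finite group whose $w$-values are covered by $m$ subgroups of exponent $e$ the verbal subgroup $w(G)$ has exponent bounded in terms of $m,e$ and the weight of $w$ --- is precisely Problem \ref{802}, stated in this very survey as \emph{open} for general multilinear commutators (Theorem \ref{cc4} settles only $w=\gamma_k$). Moreover, you never secure a fixed pair $(m,e)$ that works uniformly across all finite quotients: the Baire-in-$G_w$ step you sketch yields only that some $H_i[n]$ has non-empty interior in $G_w$, i.e.\ a local condition on a single open patch of $w$-values, not a global covering of all $w$-values by boundedly many subgroups of bounded exponent.

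In short, the proposal starts from a false assertion about $G$, pursues a conclusion stronger than the theorem and equivalent to an open problem, and along the way assumes another open problem as a black box.
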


Of course, under the hypotheses of Theorem \ref{cc1} all $w$-values in $G$ must have finite order. Therefore Theorem \ref{cc1} is related to the conjecture that if a profinite group $G$ admits a group-word $v$ such that all $v$-values have finite order, then the  verbal subgroup $v(G)$ is locally finite (see special cases stated in \cite{Sh3,khushu14} and similar problems 15.104 and 17.126 in the Kourovka Notebook \cite{kouro}). This conjecture is a natural generalization of the aforementioned Zelmanov's theorem on local finiteness of periodic profinite groups. Note that in this conjecture the orders of $v$-values are not assumed to be bounded.

Some particular cases of Theorem \ref{cc1} were established in other articles. The case of Theorem \ref{cc1} where $w=[x,y]$ was established in \cite{AS2}. The case where the set of $w$-values is covered by only finitely many periodic subgroups was established in \cite{AS1}. Since a solution to the problem whether any periodic profinite group has finite exponent is not in sight, it is natural to study in some detail the situation in which  the set of $w$-values is covered by countably many subgroups of finite exponent. A close inspection of the proof of Theorem \ref{cc1} in \cite{DMS} reveals that if $G$ is a profinite group that has countably many subgroups of finite exponent whose union contains all $w$-values, then $w(G)$ has finite exponent as well. However the proof in \cite{DMS} provides no clue to the natural question whether the exponent of $w(G)$ depends on the exponents of covering subgroups.

\begin{problem}\label{801} Let $w$ be a multilinear commutator word and $G$ a profinite group in which all $w$-values are contained in a union of countably many subgroups of finite exponent dividing $e$. Does $w(G)$ necessarily have an open subgroup of finite exponent dividing $e$?
\end{problem}

While finding appropriate techniques to handle the above problem remains elusive, we do have quantitative results on the bounds for the exponent of $w(G)$. The next theorem was proved in \cite{AS1}.

\begin{theorem}
\label{cc4} Let $e,k,s$ be positive integers and $G$ a profinite group that has  subgroups $G_1,G_2,\dots,G_s$ whose union contains all $\gamma_k$-values in $G$. Suppose that each of the subgroups $G_1,G_2,\dots,G_s$ has finite exponent dividing $e$. Then $\gamma_k(G)$ has finite $(e,k,s)$-bounded exponent.
\end{theorem}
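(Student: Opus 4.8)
The plan is to reduce to the case of a pro-$p$ group via Herfort's theorem and then exploit known quantitative results on groups of bounded exponent. First I would observe that the hypotheses force all $\gamma_k$-values in $G$ to have order dividing $e$, so $\gamma_k(G)$ is a periodic profinite group; by Herfort's theorem the set $\pi(\gamma_k(G))$ is finite, and in fact one can bound $|\pi(\gamma_k(G))|$ in terms of $e$ alone (only primes dividing $e$ can occur among the orders of $\gamma_k$-values, and these generate $\gamma_k(G)$). Thus it suffices to bound the exponent of each Sylow $p$-subgroup of $\gamma_k(G)$ for the finitely many relevant primes $p$, so I may pass to the pro-$p$ quotient $G/O_{p'}(\gamma_k(G))$ — here one checks that the images of the $G_i$ still cover the $\gamma_k$-values and still have exponent dividing $e$. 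So I reduce to: \emph{$G$ is a profinite group with a normal pro-$p$ subgroup $\gamma_k(G)$, and finitely many subgroups $G_1,\dots,G_s$ of exponent dividing $e$ covering the $\gamma_k$-values; bound $\exp(\gamma_k(G))$.}

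Next I would use a Baire-category/counting argument at the finite level. Working in a finite continuous quotient $\bar G$ of $G$, the set of $\gamma_k$-values is covered by the $\bar G_i$; since each $\bar G_i$ has exponent dividing $e$, every $\gamma_k$-value $g$ satisfies $g^e=1$. By Neumann-type counting (or directly, since a single coset-type covering argument is not needed — we only need that $\gamma_k$-values are $e$-torsion), $\gamma_k(\bar G)$ is generated by elements of order dividing $e$. At this point I invoke the positive solution of the restricted Burnside problem in the relevant form: a finite group generated by $d$ elements of exponent dividing $e$ has order bounded in terms of $d$ and $e$ is \emph{not} quite what I want, since $d$ is unbounded; instead I would use the sharper statement that in a $d$-generator finite group $H$, the verbal subgroup $\gamma_k(H)$ generated by $\gamma_k$-values all of which have order dividing $e$ has exponent bounded in terms of $e$, $k$, and the number $s$ of covering subgroups only. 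The mechanism for removing the dependence on $d$ is the standard Shumyatsky-type technique: localize to a single $G_i$. Concretely, for a fixed $i$, the subgroup generated by those $\gamma_k$-values lying in $\bar G_i$ has exponent dividing $e$ (it lies in $\bar G_i$); one then shows that the various such subgroups, as $i$ ranges over $1,\dots,s$, together with their conjugates, control $\gamma_k(\bar G)$ up to $(e,k,s)$-bounded exponent, using the fact that a product of $s$ normal subgroups each of exponent dividing $e$ has exponent dividing a function of $e$ and $s$ in a nilpotent-residual setting.

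The technical heart — and the step I expect to be the main obstacle — is showing that $\gamma_k(\bar G)$ is actually \emph{generated by $(e,k,s)$-boundedly many} of the covering subgroups' traces together with a normally-$(e,s)$-bounded-exponent piece; equivalently, controlling how the $s$ covering subgroups $G_i$ interact under the commutator structure of $\gamma_k$. The approach here is the one used in \cite{AS1}: one first treats the case $s=1$ (where $\gamma_k(G) \le G_1$ trivially gives exponent dividing $e$), then argues by induction on $s$, splitting off one covering subgroup at a time and applying the coprime-commutator machinery and the Hall–Higman / Zelmanov-type bounds for Engel-like conditions to handle the ``leftover'' $\gamma_k$-values not absorbed by the chosen $G_i$. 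Once the finite-quotient bound is established uniformly, an inverse-limit argument transfers it to the profinite group $\gamma_k(G)$, giving the claimed $(e,k,s)$-bounded exponent. I would expect the soluble case to come out via Hall–Higman-style reductions and the general case to require Zelmanov's solution of the restricted Burnside problem to handle the semisimple layers, exactly as in the proof of Theorem \ref{cc1}.
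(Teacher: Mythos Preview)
The paper does not prove Theorem~\ref{cc4} in the text; it only records that the proof in \cite{AS1} ``is based on the Lie-theoretic techniques that Zelmanov created in his solution of the restricted Burnside problem''. Your proposal does not follow that route, and the place where it diverges is exactly where it breaks down.

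Your preliminary reductions are reasonable: every $\gamma_k$-value has order dividing $e$, so $\pi(\gamma_k(G))\subseteq\pi(e)$, and working in finite quotients and prime by prime is fine. The genuine gap is the ``technical heart'', the induction on $s$. For $s=1$ the result is trivial, but the inductive step you sketch has no working mechanism. Splitting off $G_s$ by passing to $G/\langle G_s^G\rangle$ does let you apply the inductive hypothesis to the quotient, but you then need to bound the exponent of $\gamma_k(G)\cap\langle G_s^G\rangle$, and the normal closure of a subgroup of exponent $e$ has no exponent bound whatsoever. Your remark that ``a product of $s$ normal subgroups each of exponent dividing $e$ has exponent dividing a function of $e$ and $s$'' does not apply: the $G_i$ are not normal, and their normal closures are not of bounded exponent. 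The appeal to ``coprime-commutator machinery'' is a placeholder; nothing coprime is happening once you are inside a single prime $p$. Likewise, invoking Zelmanov's solution of the restricted Burnside problem only ``to handle the semisimple layers'' misses the point: in \cite{AS1} the Lie methods are the engine of the $p$-group case, not a patch for nonsoluble composition factors.

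What the actual argument does, in outline, is pass to finite quotients and, for a fixed prime $p\mid e$, consider the associated Lie ring $L$ of a suitable $p$-group (built from the lower central or dimension series). The covering hypothesis and the exponent bound on the $G_i$ are used to show that $L$ satisfies a polynomial identity and is generated by elements that are ad-nilpotent of bounded index. Zelmanov's theorem on Lie rings with these two properties then yields bounded nilpotency of $L$, and this is translated back into an $(e,k,s)$-bounded exponent for the relevant $p$-section of $\gamma_k(G)$. The dependence on $s$ enters through the combinatorics needed to produce the polynomial identity from the covering by $s$ subgroups, not through an induction that peels the $G_i$ off one at a time.
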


The proof of Theorem \ref{cc4} given in \cite{AS1} is based on the Lie-theoretic techniques that Zelmanov created in his solution of the restricted Burnside problem \cite{ze1,ze2}. Theorem \ref{cc4} naturally suggests the problem whether a similar result holds for any multilinear commutator.

\begin{problem}\label{802} Let $e,k,s$ be positive integers and $w$ a multilinear commutator word of weight $k$. Suppose that $G$ is a profinite group having  subgroups $G_1,G_2,\dots,G_s$, each of finite exponent dividing $e$, whose union contains all $w$-values in $G$. Does $w(G)$ necessarily have finite $(e,k,s)$-bounded exponent?
\end{problem}

\bigskip
\subsection*{Coverings of commutators by subgroups of finite rank}
A profinite group is said to have finite rank $r$ if each of its subgroups can be generated by at most $r$ elements. In the case of abstract groups the definition is slightly different: an abstract group is said to have finite rank $r$ if each of its finitely generated subgroups can be generated by at most $r$ elements. It follows from results obtained independently by Guralnick \cite{gur} and Lucchini \cite{lucchi} that a profinite group has finite rank if and only if there exists an integer $r$ such that each of its Sylow subgroups is of rank at most $r$. Pro-$p$ groups of finite rank are now fairly well understood -- the theory of powerful pro-$p$ groups \cite{LM,LM2} provides the techniques and methods for handling such groups. In the present subsection we briefly survey results on coverings of commutators by subgroups of finite rank. The reader will see that currently all known results are in parallel with the results on coverings by periodic subgroups.

\begin{theorem}\label{cc15} Let $w$ be a multilinear commutator word and $G$ a profinite group  that has countably many subgroups of finite rank whose union contains all $w$-values in $G$. Then $w(G)$ has finite rank. 
\end{theorem}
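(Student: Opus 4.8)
Let $n$ be the number of variables of $w$ and let $\phi\colon G^{(n)}\to G$ be the continuous word map, so $G_w=\phi(G^{(n)})$. The plan is to combine a Baire category localization with commutator identities for multilinear commutator words, reduce to the case in which $G$ has an open subgroup obeying the law $w$, and finally use the covering a second time to control the ranks of the Sylow subgroups of $w(G)$. First I would run the localization: each $\phi^{-1}(H_i)$ is closed in the profinite, hence Baire, space $G^{(n)}$, and these sets cover $G^{(n)}$ since $G_w\subseteq\bigcup_iH_i$; therefore some $\phi^{-1}(H_j)$ has non-empty interior, so there are an open normal subgroup $N\trianglelefteq G$ and elements $a_1,\dots,a_n\in G$ with $a_1N\times\cdots\times a_nN\subseteq\phi^{-1}(H_j)$. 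Thus $w(a_1n_1,\dots,a_nn_n)\in H_j$ for all $n_1,\dots,n_n\in N$, where $H_j$ has finite rank, say $r$.

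Next I would exploit multilinearity. Expanding $w(a_1n_1,\dots,a_nn_n)$ by means of the identities $[uv,z]=[u,z]^v[v,z]$ and their higher-weight analogues writes each such value as a product of conjugates of $w$-values whose $i$-th entry is $a_i$ or $n_i$; applying a finite-difference (polarization) scheme to the values lying in $H_j$, together with an induction on the weight of $w$, should isolate the pure values $w(n_1,\dots,n_n)$ and show that $w(N_1)$ is contained in a subgroup of finite rank for a suitable open normal subgroup $N_1\le N$ of $G$. Since finite rank is inherited by quotients, $G/w(N_1)$ again satisfies the hypothesis, and since $w(N_1)\trianglelefteq w(G)$ is of finite rank and the class of finite-rank profinite groups is closed under extensions, it suffices to prove the theorem under the additional assumption that $G$ has an open normal subgroup $N_1$ with $w(N_1)=1$.

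In this reduced situation $N_1$ obeys the law $w$ and $[G:N_1]=t<\infty$. The same multilinear expansion now shows that $w(G)$ is the normal closure in $G$ of the $w$-values having at least one argument in a fixed transversal of $N_1$, and analysing the remaining arguments with the multilinearity identities reduces everything to bounding the ranks of finitely many subgroups built from nested commutators whose inner entries range over $N_1$. Here I would invoke the covering once more: such a subgroup is generated by $w$-values, so a Baire argument inside it (or inside $w(G)$) again forces an open subgroup of it into a single $H_j$ of finite rank; combining this with the structure theory of profinite groups --- reduction modulo the prosoluble radical and control of the semisimple sections via Herfort's theorem and the behaviour of multilinear commutators in Cartesian products of finite simple groups --- one bounds the rank of every Sylow subgroup of $w(G)$ and concludes by the Guralnick--Lucchini criterion that $w(G)$ has finite rank.

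The hard part will be the control of error terms in the last two steps. Unlike periodicity or bounded exponent, finite rank is not preserved under joins --- the join of two procyclic subgroups can already have infinite rank --- so the combinatorics must be organized so that what is captured is captured inside a \emph{single} covering subgroup $H_j$ rather than inside finitely many of them; it is precisely this that excludes the would-be counterexamples (for instance $\bigl(\prod_{i\in\mathbb{N}}C_p\bigr)\rtimes C_p$, whose commutator subgroup has infinite rank) and that forces the covering to be used repeatedly rather than just once.
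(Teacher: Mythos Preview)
The paper is a survey; it does not prove Theorem~\ref{cc15} but merely states it and refers to \cite{DMS} for the proof. There is therefore no in-paper argument to compare yours against directly.

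On its own merits, your plan has the right broad architecture --- Baire localization on the word map, multilinear expansion, reduction to an open subgroup satisfying the law, then structure theory and the Guralnick--Lucchini criterion --- and this is indeed the shape of the argument in \cite{DMS}. The genuine gap is the polarization step in your second paragraph: expanding $w(a_1n_1,\dots,a_nn_n)$ via the identities $[uv,z]=[u,z]^v[v,z]$ produces a product of $2^n$ conjugated $w$-values, and no ``finite-difference scheme'' will isolate the pure term $w(n_1,\dots,n_n)$ from such a product in a non-abelian group --- the cross-terms do not cancel, and knowing that a product lies in $H_j$ says nothing about its individual factors. This is precisely where your own caveat about joins bites: the cross-terms generate a subgroup that need not have finite rank even though each factor is a $w$-value. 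In \cite{DMS} this obstacle is not overcome by polarization at all; instead one first observes that finite-rank profinite groups are virtually prosoluble, so Theorem~\ref{cc3} applies and $G$ itself is virtually prosoluble, and then one runs an induction on the weight (tree structure) of $w$ inside an open prosoluble subgroup, with repeated Baire localizations adapted to the sub-words of $w$. Your third paragraph is likewise too schematic to stand as written: ``a Baire argument inside it (or inside $w(G)$)'' is a hope rather than a step, and the control of semisimple sections you invoke is exactly what the virtually-prosoluble reduction is meant to make unnecessary. You have correctly identified the obstruction; what is missing is the mechanism --- the prosoluble reduction plus induction on the word --- that actually removes it.
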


\begin{theorem}
\label{cc45} Let $r,k,s$ be positive integers and $G$ a profinite group that has subgroups $G_1,G_2,\dots,G_s$ whose union contains all $\gamma_k$-values in $G$. Suppose that each of the subgroups $G_1,G_2,\dots,G_s$ has finite rank at most $r$. Then $\gamma_k(G)$ has finite $(r,k,s)$-bounded rank.
\end{theorem}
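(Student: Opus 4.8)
The plan is to follow the strategy behind Theorem \ref{cc4}, replacing the exponent bound with a rank bound and the restricted Burnside problem machinery with the theory of powerful pro-$p$ groups. First I would reduce to the case of pro-$p$ groups: since a profinite group has finite rank at most $r$ if and only if each Sylow subgroup has rank at most $r$ (by the Guralnick--Lucchini result), and since the property of being a $\gamma_k$-value is inherited by homomorphic images, it suffices to bound the rank of the image of $\gamma_k(G)$ in each Sylow pro-$p$ quotient, provided we can control the set of primes involved and assemble the local bounds into a global one. Here Herfort-type arguments bounding $|\pi(\gamma_k(G))|$ in terms of $s$ (each $G_i$ of finite rank has $\pi(G_i)$ possibly infinite, so this point needs care) or, more robustly, a direct verbal-subgroup argument using that $\gamma_k(G)$ is generated by $\gamma_k$-values lying in the $G_i$, should let us pass to a single prime.

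Next, working inside a pro-$p$ group $P$ with the hypothesis that all $\gamma_k$-values lie in $G_1\cup\cdots\cup G_s$ with each $G_i$ of rank at most $r$, I would aim to produce a powerful open subgroup of $\gamma_k(P)$ of $(r,k,s)$-bounded rank. The key reduction is that a finitely generated pro-$p$ group of rank at most $d$ is exactly one that has a powerful open subgroup of bounded index and bounded number of generators; so it is enough to find, inside $\gamma_k(P)$, finitely many $\gamma_k$-values generating an open subgroup, and to bound how many are needed. A covering argument analogous to the one used for the exponent case applies: if all $\gamma_k$-values lie in $s$ subgroups of rank $\le r$, then for any fixed $g_1,\dots,g_{k-1}\in P$ the map $x\mapsto[g_1,\dots,g_{k-1},x]$ has image covered by the $G_i$, and a Neumann/Baire-style argument forces one $G_i$ to contain an open-image worth of such commutators. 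Iterating over the $k-1$ slots and using that $\gamma_k(P)$ is topologically generated by the values $[g_1,\dots,g_k]$, one extracts a $(r,k,s)$-bounded generating set for an open subgroup of $\gamma_k(P)$; combined with the rank-$\le r$ bound on each $G_i$ this gives the powerful-open-subgroup description and hence the rank bound.

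The main obstacle, I expect, is the quantitative bookkeeping when passing from the countable/qualitative statement of Theorem \ref{cc15} to an explicit $(r,k,s)$-bound: Baire category gives \emph{an} open covering subgroup but no bound on its index, so the finitely-many-subgroups hypothesis must be exploited combinatorially rather than topologically, presumably by an inverse-limit reduction to finite groups where one can count. Concretely, one should first prove the corresponding statement for finite $p$-groups --- a finite $p$-group in which all $\gamma_k$-values lie in $s$ subgroups of rank $\le r$ has $\gamma_k$ of $(r,k,s)$-bounded rank --- via a Hall--Higman / powerful-subgroup induction on the order, and then lift to pro-$p$ groups by the inverse limit. A secondary difficulty is the coprime-prime assembly step: unlike the exponent case, a subgroup of finite rank can involve infinitely many primes, so the reduction to a single prime must be done at the level of $\gamma_k(G)$ itself (using that it is $\prod_p O_p(\gamma_k(G))$ only if it is pronilpotent, which it need not be), and this likely requires first handling the prosoluble or even the general case by invoking the structure theory used in the proof of Theorem \ref{cc15}.
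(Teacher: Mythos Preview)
The paper is a survey and does not contain a proof of Theorem~\ref{cc45}; it simply attributes the result to \cite{AS1} (see the sentence ``Theorems \ref{cc15} and \ref{cc45} were proved in \cite{DMS} and \cite{AS1}, respectfully''). There is therefore no in-paper argument to compare your proposal against.

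As for the proposal itself, it is an honest sketch but not yet a proof, and you correctly identify the two places where it is incomplete. First, the reduction to a single prime does not go through as written: the Guralnick--Lucchini criterion bounds the rank of a profinite group by the maximum rank of its Sylow subgroups, but $\gamma_k(G)$ need not be pronilpotent, so looking at Sylow pro-$p$ quotients of $G$ (or of $\gamma_k(G)$) does not directly recover the rank of $\gamma_k(G)$. Your fallback suggestion of invoking ``the structure theory used in the proof of Theorem~\ref{cc15}'' is vague; any actual argument here needs a concrete mechanism, and in \cite{AS1} this reduction is handled with specific tools rather than by general appeal. Second, in the pro-$p$ step your Neumann/Baire argument on the map $x\mapsto[g_1,\dots,g_{k-1},x]$ is not quite right: this map is not a homomorphism, so even if its image is covered by $s$ subgroups, one does not automatically get a coset (or open set) of values landing in a single $G_i$ with any bound on the index. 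Turning this into a quantitative statement is exactly the heart of the matter, and your proposal to reduce to finite $p$-groups and induct on the order is reasonable in spirit but is again only a plan. In short, you have identified a plausible architecture and its pressure points, but the two key steps remain to be supplied.
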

Theorems \ref{cc15} and \ref{cc45} were proved in \cite{DMS} and \cite{AS1}, respectfully. In the natural way they suggest open problems analogous to the problems mentioned in the previous subsection.

\begin{problem}\label{1801} Let $w$ be a multilinear commutator word and $G$ a profinite group in which all $w$-values are contained in a union of countably many subgroups of finite rank at most $r$. Does $w(G)$ necessarily have an open subgroup of finite rank at most $r$?
\end{problem}

\begin{problem}\label{1802} Let $r,k,s$ be positive integers and $w$ a multilinear commutator word of weight $k$. Suppose that $G$ is a profinite group having  subgroups $G_1,G_2,\dots,G_s$, each of finite rank at most $r$, whose union contains all $w$-values in $G$. Does $w(G)$ necessarily have finite $(r,k,s)$-bounded rank?
\end{problem}

\bigskip
\subsection*{Coverings of commutators by subgroups with finiteness conditions}

Since the results on coverings of commutators by subgroups of finite rank are similar to those on coverings of commutators by periodic subgroups, it is natural to look for a common approach to both. First of all we remark that both properties -- being periodic and being of finite rank -- are so called finiteness conditions, that is, conditions automatically satisfied by all finite groups. Further scrutiny reveals other properties common to all locally finite groups and groups of finite rank playing important role in the context of our subject. It seems likely that the above results are part of a more general phenomenon.

Indeed, let $\Sigma$ be a property of profinite groups such that:

1. Every finite group is a $\Sigma$-group;

2. The class of all $\Sigma$-groups is closed under taking subgroups, quotients and extensions;

3. If a profinite group $G$ is virtually soluble and $G/Z(G)$ is a $\Sigma$-group, then the commutator subgroup $G'$ is a $\Sigma$-group as well.

The following theorem was established in \cite{AS1}.
\begin{theorem}\label{gera} Let $w$ be a multilinear commutator word and $G$ a profinite group  that has finitely many $\Sigma$-subgroups whose union contains all $w$-values in $G$. Then $w(G)$ has the property $\Sigma$. 
\end{theorem}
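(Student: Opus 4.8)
The plan is to reduce the assertion, by means of Baire's category theorem, to the case of a virtually soluble group, and then to run an induction along a soluble series in which property~3 does the work. Two soft facts will be used throughout. First, by properties~1 and~2 a profinite group possessing an \emph{open} $\Sigma$-subgroup is itself a $\Sigma$-group, because the core of such a subgroup is an open normal $\Sigma$-subgroup with finite (hence $\Sigma$-) quotient, so extension-closure applies; thus ``virtually $\Sigma$'' coincides with ``$\Sigma$'', and it is enough to produce an open $\Sigma$-subgroup inside $w(G)$. Secondly, and in the same spirit: if a closed subgroup $H\le G$ satisfies $H\subseteq\bigcup_{i=1}^{s}G_{i}$, then, $H$ being compact and covered by the finitely many closed sets $H\cap G_{i}$, Baire's theorem makes one $H\cap G_{i}$ open in $H$, so $H$ is a $\Sigma$-group.

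The first main step is to produce an open normal subgroup $K\trianglelefteq G$ with $w(K)\in\Sigma$. Apply Baire's theorem to the compact group $G^{n}$, covered by the finitely many closed sets $T_{i}=\{(a_{1},\dots,a_{n})\in G^{n} : w(a_{1},\dots,a_{n})\in G_{i}\}$; one $T_{i}$ has non-empty interior, so there are an open normal subgroup $U\trianglelefteq G$ and a tuple $(a_{1},\dots,a_{n})$ with $w(b_{1},\dots,b_{n})\in G_{i}$ whenever $b_{k}\in a_{k}U$ for all $k$. Since $w$ is a multilinear commutator word, replacing the single entry $a_{1}$ by $a_{1}u$ with $u\in U$ changes the value only by the conjugate of the $w$-value $w(u,a_{2},\dots,a_{n})$; this converts the local information into control of the $w$-values lying in an open subgroup, and, arguing by induction on the number $s$ of covering subgroups, one reaches an open normal $K$ all of whose $w$-values lie in a single $G_{i}$, so $w(K)\le G_{i}$ and $w(K)\in\Sigma$. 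Now $w(K)$ is characteristic in $K$, hence normal in $G$; as $w$ is multilinear, $w(G)/w(K)=w(G/w(K))$, and in $\bar G=G/w(K)$ the open normal subgroup $\bar K=K/w(K)$ satisfies $w(\bar K)=1$. Because $w$ is a multilinear commutator word of some weight $n$, the identity $w(\bar K)=1$ forces $\bar K$ to be soluble of bounded derived length, so $\bar G$ is virtually soluble; and $\bar G$ inherits the hypothesis, the subgroups $G_{i}w(K)/w(K)$ being $\Sigma$-groups by quotient-closure. Since $w(G)$ is an extension of the $\Sigma$-group $w(K)$ by $w(\bar G)=w(G)/w(K)$, it remains, by extension-closure, to prove the theorem for virtually soluble $G$.

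So let $G$ be virtually soluble, with an open normal soluble subgroup $S$ of derived length $d$, and $G_{w}\subseteq\bigcup_{i}G_{i}$. I would argue by induction on $d$ and on the structure of $w$; the mechanism is already visible when $w=[x_{1},x_{2}]$ and $S$ is abelian. For $g\in G$ the map $a\mapsto[a,g]$ is then a homomorphism of $S$ with image the closed subgroup $[S,g]$, which lies in $\bigcup_{i}(S\cap G_{i})$ and is therefore a $\Sigma$-group by the second fact above; moreover $[S,g]$ depends only on the coset $gS$, so there are only finitely many such subgroups, each normalised by $S$, and their join $[S,G]$ is a $\Sigma$-group by repeated use of extension-closure. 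Modulo $[S,G]$ the image of $S$ is central of finite index, so $G/[S,G]$ is virtually soluble and central-by-finite, whence property~3 gives $(G/[S,G])'=G'/[S,G]\in\Sigma$; together with $[S,G]\in\Sigma$ this yields $w(G)=G'\in\Sigma$. For $d>1$, and for a general multilinear commutator word, one runs an analogous but heavier descent: identify the part of $w(G)$ carried by the top abelian layer $S^{(d-1)}$, show it is a $\Sigma$-group by combining the $G_{i}$ with Baire on that section, pass to $G/S^{(d-1)}$ to lower $d$, invoke the inductive hypothesis, and reassemble $w(G)$ as an extension of $\Sigma$-groups, bridging the central quotients via property~3.

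The main obstacle, I expect, is the passage in the first main step from the purely local Baire information — all $w$-values near one fixed value lying in a single $G_{i}$ — to the global conclusion that $w(K)$ lies inside one of the $G_{i}$ for an open normal $K$: the $G_{i}$ are not assumed normal, so translating the neighbourhood and controlling the conjugates is delicate, and the induction on $s$ must be arranged so that it terminates. The other sensitive point is the descent through the derived series when $d>1$ and for a general $w$: one has to isolate exactly the portion of $w(G)$ sitting inside $\bigcup_{i}G_{i}$ and carry it correctly through the quotients while keeping the covering hypothesis — and it is precisely here that the full strength of properties~1--3 is used.
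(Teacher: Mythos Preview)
The paper is a survey and does \emph{not} prove Theorem~\ref{gera}: it merely records that the result was established in \cite{AS1} and then moves on to consequences and open problems. There is therefore no proof in the present paper to compare your proposal against.

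That said, a few words on the proposal itself are in order. Your overall architecture---reduce to the virtually soluble case and then climb a derived series using property~3---is the right shape, and your second paragraph correctly explains why, \emph{once} one has an open normal $K$ with $w(K)\in\Sigma$, the quotient $G/w(K)$ is virtually soluble (because every $\delta_{n}$-value is a $w$-value when $w$ has weight~$n$). The abelian base case you sketch for $w=[x_{1},x_{2}]$ is also essentially correct and is indeed the engine of the inductive argument.

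The genuine gap is exactly where you flag it: the passage from the Baire datum ``$w(a_{1}U,\dots,a_{n}U)\subseteq G_{i}$'' to the existence of an open normal $K$ with $w(K)\subseteq G_{i}$ for a \emph{single} $i$. Your one-line justification (``replacing $a_{1}$ by $a_{1}u$ changes the value only by a conjugate of $w(u,a_{2},\dots,a_{n})$'') is not correct as stated: already for $w=[x,y]$ one has $[a_{1}u,a_{2}]=[a_{1},a_{2}]^{u}\,[u,a_{2}]$, so the old value is conjugated as well, and the $G_{i}$ are not assumed normal. For longer multilinear commutators the expansion produces a product of several conjugated $w$-values with mixed entries, and there is no mechanism here forcing them into a single $G_{i}$. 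The induction on $s$ you allude to would need a precise statement of what is being strengthened at each step; as written it does not terminate in the desired conclusion. In the original source \cite{AS1} this reduction to the virtually soluble case is handled differently (via a combinatorial argument specific to multilinear commutators rather than by trapping $w(K)$ inside one covering subgroup), so if you want to complete the proof you should either consult that paper or find an alternative route around this step.
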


In the case where the coverings are finite both Theorem \ref{cc1} and Theorem \ref{cc15} can be deduced from Theorem \ref{gera}. We do not know if Theorem \ref{gera} remains correct if the hypothesis that the covering is finite is replaced by a (seemingly weaker) hypothesis that $G$ has countably many $\Sigma$-subgroups covering all $w$-values. We therefore state this as a conjecture.

\begin{conjecture}\label{gerat} Let $\Sigma$ be as above and $w$ be a multilinear commutator word and $G$ a profinite group  that has countably many $\Sigma$-subgroups whose union contains all $w$-values in $G$. Then $w(G)$ necessarily has the property $\Sigma$. 
\end{conjecture}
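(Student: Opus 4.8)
We outline a possible approach, modelled on the proofs of Theorems \ref{cc1} and \ref{cc15} in \cite{DMS}: the aim is to extract the features of the classes ``periodic'' and ``of finite rank'' that are genuinely used there and to verify that they amount to exactly properties~1--3. Two preliminary observations set the stage. First, the word map attached to a multilinear commutator word $w$ of weight $n$ is a continuous function on the compact space $G^{n}$, so the set $G_{w}$ of all $w$-values is a \emph{closed} subset of $G$. Second (a ``principle'' we shall use repeatedly): if an arbitrary profinite group $H$ is covered by countably many $\Sigma$-subgroups, then by Baire's category theorem one of them is open, hence of finite index, and its normal core is a $\Sigma$-subgroup of finite index in $H$; by properties~1 and~2 it follows that $H\in\Sigma$. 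Consequently it will be enough, at the end, to cover $w(G)$ by countably many $\Sigma$-subgroups, or even just to produce an open $\Sigma$-subgroup of $w(G)$. We describe the argument for $w=[x,y]$; for a general multilinear commutator word one needs the analogous but heavier commutator calculus, possibly after first lowering the weight by induction along the chain of multilinear commutator subgroups. We may also assume $w$ has weight at least $2$, so that $w(G)\le G'$ (the weight-one case is the ``principle'' applied to $G$ itself).

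\textbf{Localization and reduction to the virtually soluble case.} Applying Baire's theorem to $G_{w}=\bigcup_{i}(G_{i}\cap G_{w})$, some $G_{i_{0}}\cap G_{w}$ has nonempty interior in $G_{w}$, so there are a $w$-value $g_{0}=[a,b]$ and an open normal subgroup $N\trianglelefteq G$ with $g_{0}N\cap G_{w}\subseteq G_{i_{0}}$. Since $[a\mu,b\nu]\equiv[a,b]\pmod N$ for all $\mu,\nu\in N$, the whole set $\{[a\mu,b\nu]:\mu,\nu\in N\}$ lies in $G_{i_{0}}$, and a routine computation with $[xy,z]=[x,z]^{y}[y,z]$, $[x,yz]=[x,z][x,y]^{z}$ and the normality of $N$ shows that the subgroup it generates, together with $g_{0}$, contains $[a,N]$, $[N,b]$ and $N'=w(N)$. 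Hence $w(N)\le G_{i_{0}}$ is a $\Sigma$-group. Replacing $G$ by $G/w(N)$ — legitimate since $\Sigma$ is quotient- and extension-closed and $w(G/w(N))=w(G)/w(N)$ — we may assume $w(N)=1$; because $w$ is a multilinear commutator word, $w\equiv1$ on $N$ forces $N$, and therefore $G$ (as $N$ is open), to be virtually soluble.

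\textbf{The virtually soluble case.} The target is now $G/Z(G)\in\Sigma$: granting this, property~3 gives $G'\in\Sigma$, and $w(G)\le G'$ finishes the proof by property~2. Let $R$ be the soluble radical of $G$ (an open normal subgroup) and argue by induction on its derived length. If $R=1$ then $G$ is finite. Otherwise let $A=R^{(d-1)}$ be the last nontrivial term of the derived series of $R$, an abelian normal subgroup of $G$; applying the inductive hypothesis to $G/A$ (again virtually soluble, with $w$-values covered by the images of the $G_{i}$) shows that $G/A$ modulo its centre is a $\Sigma$-group, so by the ``principle'' it remains to handle the piece coming from $A$, which reduces to proving that a suitable section with abelian normal part built from $A$ lies in $\Sigma$. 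For this one runs Baire a second time, on the closed set $A\cap G_{w}$: localizing as before produces an element $b\in G$ and a subgroup $W$, open in $A$ and normal in $G$, with $[b,W]$ contained in one of the $G_{i}$; as $[b,\,\cdot\,]$ is a homomorphism on the abelian group $A$, the subgroup $[b,W]$ has finite index in $[b,A]$, whence $[b,A]\in\Sigma$. Feeding back the finitely many relevant $b$'s (using that $A$ is generated by $w$-values lying in it) and applying property~3 to the resulting metabelian-by-finite sections gives the desired $\Sigma$-section; any finite coverings left over are absorbed by Theorem \ref{gera}.

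\textbf{The main obstacle.} The localization and the reduction to virtual solubility are essentially formal. The real difficulty is the last step — turning the covering hypothesis into genuine structural control of the abelian normal section $A$ — and the awkward feature is that $C_{G}(A)$ need not be open when $A$ is not, so one cannot simply pass to a finite-index centralizer. One must instead squeeze out of the covering of $A\cap G_{w}$ enough $\Sigma$-subgroups of $A$ to pin down the relevant part of $w(G)$, and it is exactly here that property~3 has to be invoked in the correct form. For $\Sigma$ equal to ``periodic'' or ``of finite rank'' this step is precisely the substantial content of \cite{DMS}, where Zelmanov's local finiteness theorem, respectively the theory of powerful pro-$p$ groups, does the job; the conjecture asserts that properties~1--3 already suffice.
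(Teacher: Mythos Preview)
The statement you are attempting to prove is presented in the paper as an open \emph{conjecture}; the authors explicitly say ``We do not know if Theorem \ref{gera} remains correct if the hypothesis that the covering is finite is replaced by a (seemingly weaker) hypothesis that $G$ has countably many $\Sigma$-subgroups covering all $w$-values. We therefore state this as a conjecture.'' There is no proof in the paper to compare against.

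Your proposal is, by your own admission, not a proof either: the section you label ``The main obstacle'' says plainly that ``the real difficulty is the last step'' and that for the two known instances ($\Sigma=$ locally finite, $\Sigma=$ finite rank) this step is ``precisely the substantial content of \cite{DMS},'' carried by Zelmanov's theorem and the theory of powerful pro-$p$ groups respectively. You have described the \emph{shape} of the arguments in \cite{DMS} and isolated where the genuine work lies, but you have not supplied a substitute for that work using only properties 1--3. So what you have written is an informed strategic outline, not a proof of the conjecture.

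A couple of further remarks on the outline itself. First, the claim in the localization paragraph that the subgroup generated by $\{[a\mu,b\nu]:\mu,\nu\in N\}$ together with $g_{0}$ already contains $N'=w(N)$, and hence $w(N)\le G_{i_{0}}$, is too quick: the commutator identities you invoke produce conjugates by elements of $N$, which need not lie in the subgroup you are generating, so you do not immediately get a subgroup of $G_{i_{0}}$ containing $N'$. The standard route is rather to use the localization to force a coset identity (as the paper does at the end of Section~5) and then obtain virtual solubility via Theorem~\ref{cc3}, not to trap $w(N)$ inside a single $G_{i}$. Second, the passage ``for a general multilinear commutator word one needs the analogous but heavier commutator calculus, possibly after first lowering the weight by induction'' hides real difficulties: even in the proved cases the extension from $w=[x,y]$ to arbitrary multilinear commutators in \cite{DMS} is nontrivial and uses combinatorics specific to outer commutator words. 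These are secondary to the main gap you already identified, but they would need to be addressed in any actual proof.
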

    
%%%%%%%%%%%%%%%%%%%%%%%%%%%%%%%%%%%%%%%%%%%%%%%%%%%%%%%%%
%%%%%%%%%%%%%%%%%%%%%     CONCISENESS     %%%%%%%%%%%%%%%%%%%%%%%%%%%
%%%%%%%%%%%%%%%%%%%%%%%%%%%%%%%%%%%%%%%%%%%%%%%%%%%%%%%%%%%
\section{On conciseness of words in profinite groups}

A group-word $w$ is said to be concise if whenever the set of its values in a group $G$ is finite it always follows that the verbal subgroup $w(G)$ is finite. More generally, a word $w$ is said to be concise in a class of groups $X$ if whenever the set of its values in a group $G \in X$ is finite it always follows that $w(G)$ is finite. In 1957 P. Hall asked whether every word is concise and for a period of time this was an open problem attracting interest of many group-theorists. In 1992 Ivanov proved that this problem has a negative solution \cite{Ivan} (see also \cite[p.\ 439]{Ol}). On the other hand, many relevant words are known to be concise. 

For instance, it is an easy observation by P. Hall that every non-com\-mu\-ta\-tor word is concise (see e.g. \cite[Lemma 4.27]{Robinson}). It was shown in \cite{wilson} that the multilinear commutator words are concise (see also \cite{FAM}). Merzlyakov showed that every word is concise in the class of linear groups \cite{Merz} while Turner-Smith proved that every word is concise in the class of residually finite groups all of whose quotients are again residually finite \cite{TS}. It was shown in \cite{AS5} that if $w$ is a multilinear commutator word and $n$ is a prime-power, then the word $w^n$ is concise in the class of residually finite groups. In \cite{GS} the words $[\dots[x_1^{n_1},x_2]^{n_2},\dots,x_k]^{n_k}$ were shown to be concise in residually finite groups.

There is an open problem, due to Jaikin-Zapirain \cite{Jaik}, whether every word is concise in the class of profinite groups. Of course, every word that is concise in residually finite groups is also concise in profinite ones. An interesting phenomenon was discovered in \cite{DMS2}. It suggests that perhaps the very definition of conciseness in profinite groups should be relaxed. 

\begin{conjecture}\label{A} Assume that the word $w$ has only countably many values in a profinite group $G$. Then the verbal subgroup $w(G)$ is finite.
\end{conjecture}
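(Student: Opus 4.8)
The plan is as follows. First I would observe that the conclusion is equivalent to saying that $w(G)$ is \emph{countable}: an infinite profinite group, being a nonempty perfect compact Hausdorff space, is uncountable by Baire's theorem, so for a closed subgroup ``countable'' and ``finite'' coincide. Equivalently, enumerating $G_w=\{g_1,g_2,\dots\}$, one must prove that the (automatically countable) abstract subgroup $\langle g_1,g_2,\dots\rangle$ is already closed in $G$.

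The principal tool would again be Baire's category theorem, now applied to the word map rather than to a covering. Writing $w=w(x_1,\dots,x_n)$ and $\phi\colon G^n\to G$ for the continuous map $(x_1,\dots,x_n)\mapsto w(x_1,\dots,x_n)$, the image $\phi(G^n)\subseteq G_w\cup\{1\}$ is countable, so the compact group $G^n$ is a countable union of the closed fibres $\phi^{-1}(g)$; Baire's theorem yields a fibre with nonempty interior. Inside that interior one may choose a box $a_1V\times\cdots\times a_nV$ with $V\trianglelefteq G$ open, so that
$$ w(a_1v_1,\dots,a_nv_n)=w(a_1,\dots,a_n)\qquad\text{for all }v_1,\dots,v_n\in V . $$
Applying the same reasoning inside arbitrary open subgroups shows, moreover, that the set of points at which $\phi$ is locally constant is dense in $G^n$; this denseness is what one would hope to use to propagate the local information along a chief series of $G$.

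Next I would try to convert this ``local constancy'' into structural information about $w(G)$. Freezing all but the $j$th argument and expanding $w$ as a word with constants in the single variable ranging over $V$, the displayed identity forces strong restrictions on $a_j$ and, after a compactness argument, on $G$ itself: in good cases one deduces that suitable elements have finite conjugacy classes, that $[V,G]$ falls into a fixed term of a central or derived series modulo a finite subgroup, and ultimately that $G$ is virtually nilpotent, or virtually soluble, relative to $w(G)$. When $w$ is a multilinear commutator word this can be carried through using the standard commutator calculus together with the structural results already surveyed --- the criteria underlying Theorems~\ref{nilp2} and~\ref{nilp3} and the descriptions in Theorems~\ref{10} and~\ref{pro3} --- although a genuine difficulty is that the complement in $G^n$ of the locally-constant locus need not be a coset, so excising it requires care. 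One would then reduce, along a chief series (first to the pro-$p$ case, then the prosoluble case, then the general case), to a situation in which $G$ has a semisimple chief section, i.e. a Cartesian product of nonabelian finite simple groups on which $w$ is nontrivial, and appeal to the results on fibres of word maps in finite simple groups --- surjectivity, or fibres of size at most $|S|^{1-\varepsilon}$, which depend on the classification of finite simple groups and on work of Larsen--Shalev, Liebeck--Shalev and Nikolov--Segal --- to conclude that such a section must be finite, since otherwise $G_w$ would be uncountable.

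The hard part is that all of this machinery is presently available only for restricted classes of words: the lower central words $\gamma_k$, the derived words $\delta_k$, certain power words, and, optimistically, all multilinear commutator words. For an \emph{arbitrary} word $w$ there is no analogous commutator calculus, and since conciseness already fails over abstract groups (Ivanov \cite{Ivan}), any proof of Conjecture~\ref{A} must exploit compactness in an essential way beyond the single Baire reduction above. Finding the right such use of the profinite topology --- while simultaneously controlling the pro-(locally soluble) part of $G$ and the fibres of $w$ on Cartesian products of finite simple groups of unbounded order --- is the crux, and is exactly where the general conjecture currently stands open.
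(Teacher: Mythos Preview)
The statement you are attempting is labelled \emph{Conjecture}~\ref{A} in the paper, and the paper does \emph{not} prove it; indeed the authors say explicitly that ``Conjecture~\ref{A} seems rather hard to deal with'' and that even the case $w=x^p$ for odd $p$ is unclear. So there is no proof in the paper to compare your proposal against, and --- as you yourself concede in your final paragraph --- your proposal is not a proof either: after the Baire step you only \emph{hope} to propagate the local constancy, and you end by noting that for an arbitrary word ``there is no analogous commutator calculus'' and that this ``is exactly where the general conjecture currently stands open''. That is an honest status report, not a proof.

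Two more specific comments. First, the Baire argument you describe (finding an open box on which the word map is constant) does appear in the paper, immediately after Conjecture~\ref{B}, but there it is used only to conclude that $G$ satisfies a \emph{coset identity}; the paper does not claim, and it is not clear, that this local constancy can be ``propagated along a chief series'' as you suggest. Second, for the special cases the paper \emph{does} settle --- notably $w=\delta_k$ in Theorem~\ref{delta} --- the argument is quite different from your sketch: it does not use the word-map Baire reduction at all, but instead observes that the countably many $\delta_k$-values are trivially covered by countably many procyclic subgroups, applies the covering Theorems~\ref{cc1} and~\ref{cc15} to get $G^{(k)}$ of finite rank and (if all values are torsion) locally finite, and then, assuming some $\delta_k$-value has infinite order, passes to a finitely generated subgroup and uses Nikolov--Segal finite width (Lemma~\ref{boundedly-many}) to force $G'$ to have only countably many $\delta_{k-1}$-values, concluding by induction on $k$. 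Your outline via fibres of word maps on simple groups, virtual nilpotency, and chief-series reduction is a plausible heuristic, but it is neither the paper's route for the known cases nor, at present, a route that anyone knows how to complete in general.
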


It follows easily from \cite[Theorem 1.3]{Jaik} that the above conjecture holds true whenever $G$ is a compact $p$-adic analytic group. The article \cite{DMS2} provides further evidence in support of the conjecture. In particular, the results obtained in \cite{DMS2} confirm the conjecture in the cases where $w$ is a multilinear commutator, or $w=x^2$, or $w=[x^2,y]$. 

The proof of Conjecture \ref{A} for the word $w=\delta_k$ is rather short (modulo some results described in the previous section) and so we give it here. We require the following simple lemma.

\begin{lemma}\label{boundedly-many}  Let $m\ge1$ and $k\ge0$ be integers. There exists a number $t=t(m,k)$ depending on $m$ and $k$ only, such that if $G$ is an $m$-generated profinite group, then every $\delta_{k}$-commutator in elements of $G'$ is a product of at most $t$ elements which are $\delta_{k+1}$-commutators in elements of $G$.
\end{lemma}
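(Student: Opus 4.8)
The plan is to argue by induction on $k$, with essentially all the content concentrated in the base case. Throughout, for a subgroup $H\le G$ I will call a value $\delta_j(h_1,\dots,h_{2^j})$ with all $h_i\in H$ a \emph{$\delta_j$-value on $H$}, and I will use freely that $\delta_j$ is a multilinear commutator word, so that $\delta_j(x_1,\dots,x_{2^j})^g=\delta_j(x_1^g,\dots,x_{2^j}^g)$; in particular every conjugate of a $\delta_j$-value on $G$ is again a $\delta_j$-value on $G$. For $k=0$ a $\delta_0$-value on $G'$ is an arbitrary element of $G'$ and a $\delta_1$-value on $G$ is an ordinary commutator, so the assertion is precisely that in an $m$-generated profinite group the derived subgroup $G'$ has commutator width bounded in terms of $m$. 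I would invoke this as known input: it is the Nikolov--Segal theorem that in a topologically finitely generated profinite group $G'$ is closed and every element of it is a product of boundedly many commutators (equivalently, the statement for finite $m$-generated groups, which passes to the profinite case since $G/N$ is $m$-generated for every open normal $N$ and the set of products of at most $t$ commutators is closed). This supplies the constant $t(m,0)$.

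For the inductive step, assume the lemma for $k$ with constant $s=t(m,k)$, and let $g=\delta_{k+1}(h_1,\dots,h_{2^{k+1}})$ be a $\delta_{k+1}$-value on $G'$. By the defining recursion $g=[a,b]$, where $a=\delta_k(h_1,\dots,h_{2^k})$ and $b=\delta_k(h_{2^k+1},\dots,h_{2^{k+1}})$ are $\delta_k$-values on $G'$; by the inductive hypothesis we may write $a=u_1\cdots u_s$ and $b=v_1\cdots v_s$ with every $u_i$ and $v_j$ a $\delta_{k+1}$-value on $G$ (padding with $\delta_{k+1}(1,\dots,1)=1$ if fewer than $s$ factors occur). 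Expanding the commutator by iterated use of $[xy,z]=[x,z]^y[y,z]$ and $[x,yz]=[x,z]\,[x,y]^z$ gives
\[
g=[a,b]=\prod_{\ell=1}^{s^2}\,[u_{i_\ell},v_{j_\ell}]^{\,w_\ell}
\]
for suitable indices $i_\ell,j_\ell$ and suitable $w_\ell\in G$. Since $[u_i,v_j]^{w}=[u_i^{\,w},v_j^{\,w}]$ and conjugates of $\delta_{k+1}$-values on $G$ are again $\delta_{k+1}$-values on $G$, each factor is of the form $[\delta_{k+1}(\cdots),\delta_{k+1}(\cdots)]$, that is, a $\delta_{k+2}$-value on $G$. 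Hence $g$ is a product of at most $s^2$ such values, the lemma holds for $k+1$ with $t(m,k+1)=t(m,k)^2$, and overall one may take $t(m,k)=t(m,0)^{2^k}$.

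Everything in the inductive step is elementary commutator arithmetic together with the multilinearity of the words $\delta_j$, so the only substantial ingredient --- and the main obstacle should one want a self-contained argument --- is the base case, namely the bounded commutator width of the derived subgroup of a finitely generated profinite group, a fact that rests ultimately on the classification of finite simple groups. Once that is granted the proof is short, which is presumably why the lemma is described here as ``simple''.
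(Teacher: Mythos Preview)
Your argument is correct and essentially identical to the paper's: both induct on $k$, invoke Nikolov--Segal for the base case $k=0$, and in the inductive step write the higher $\delta$-value as $[a,b]$, apply the inductive hypothesis to each entry, and expand via the identities $[xy,z]=[x,z]^y[y,z]$ and $[x,yz]=[x,z]\,[x,y]^z$ to obtain $t(m,k)^2$ factors, each a $\delta_{k+2}$-value on $G$ by conjugacy-invariance. You make the conjugation bookkeeping slightly more explicit than the paper does, but the approach and the resulting bound $t(m,k)=t(m,0)^{2^k}$ are the same.
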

\begin{proof}
We use induction on $k$. Let $H$ be the commutator subgroup of $G$. As $G$ is finitely generated,  
a theorem of Nikolov and Segal \cite{nisega} tells us that every element of $H$ is a product
of $r$ commutators where $r$ depends only on $m$. So for $k=0$ the result follows. 

Assume that $k\geq1$ and let $u$ be a $\delta_k$-commutator in elements of $H$. Write $u=[h_1,h_2]$ where $h_1,h_2$ are $\delta_{k-1}$-commutators in  elements of $H$. By induction, $h_1,h_2$ are both  product of at most $t_1=t(m,k-1)$  $\delta_{k}$-commutators in elements of $G$. Using the well-known commutator identities $[xy,z]=[x,z]^y\,[y,z]$, $[x,yz]=[x,z]\,[x,y]^z$ we can decompose $u$ as a product of at most $t_1^2$ commutators $[a,b]$ where $a$ and $b$ are $\delta_{k}$-commutators in elements of $G$. The lemma follows.
\end{proof}

\begin{theorem}\label{delta} Let $k$ be a nonnegative integer and $G$ a profinite group with only countably many $\delta_k$-commutators. Then the $k$th commutator subgroup $G^{(k)}$ is finite.
\end{theorem}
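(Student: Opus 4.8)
\emph{Proof proposal.} The plan is to argue by induction on $k$. The base case $k=0$ is immediate: every element of $G$ is a $\delta_0$-commutator, so $G$ is countable, and a countable profinite group is finite. Indeed, writing $G$ as the union of its (closed) singletons, Baire's category theorem forces one singleton to have non-empty interior, hence to be open; so $G$ is discrete and therefore finite, and $G^{(0)}=G$ is finite.

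Now fix $k\ge 1$ and assume the theorem for $k-1$. The first step is the following finiteness fact for finitely generated subgroups: $H^{(k)}$ is finite for every finitely generated closed subgroup $H\le G$. Such an $H$ has only countably many $\delta_k$-commutators, since its $\delta_k$-commutators form a subset of those of $G$. Applying Lemma \ref{boundedly-many} with $k-1$ in place of $k$ and $H$ in place of $G$ (with $m=d(H)$), every $\delta_{k-1}$-commutator in elements of $H'$ is a product of at most $t(d(H),k-1)$ elements which are $\delta_k$-commutators in elements of $H$; hence $H'$ has only countably many $\delta_{k-1}$-commutators. By the inductive hypothesis applied to the profinite group $H'$, the subgroup $(H')^{(k-1)}=H^{(k)}$ is finite.

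The second step converts this into information about orders of values. If $c=\delta_k(g_1,\dots,g_{2^k})$ is any $\delta_k$-commutator of $G$ and $H=\overline{\langle g_1,\dots,g_{2^k}\rangle}$, then $c\in\delta_k(H)=H^{(k)}$, which is finite by the first step; so $c$ has finite order and lies in the finite procyclic — in particular periodic — subgroup $\langle c\rangle$. Therefore the countably many $\delta_k$-commutators of $G$ are covered by countably many periodic subgroups, and also by countably many procyclic subgroups. Theorem \ref{cc1} then gives that $\delta_k(G)=G^{(k)}$ is locally finite, while Theorem \ref{cc15} gives that $G^{(k)}$ has finite rank. Since a profinite group of finite rank is topologically finitely generated, and a topologically finitely generated locally finite profinite group is finite, we conclude that $G^{(k)}$ is finite.

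The crux, I expect, is the first step: recognising that one should pass to a finitely generated subgroup $H$ and then descend to $H'$, so that the inductive hypothesis applies to $H'$ — this works precisely because, by the bounded commutator width supplied by the theorem of Nikolov and Segal (already built into Lemma \ref{boundedly-many}), the set of $\delta_k$-commutators of the finitely generated group $H$ ``controls'' all of $H'$ through products of boundedly many terms, keeping countability. The remaining step, deducing finiteness from local finiteness together with finite rank, is comparatively soft and rests only on Theorems \ref{cc1} and \ref{cc15} of the previous section.
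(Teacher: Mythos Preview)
Your proof is correct and follows essentially the same approach as the paper: the same induction on $k$, the same use of Lemma~\ref{boundedly-many} to push the hypothesis down to $H'$ for a finitely generated $H$, and the same combination of Theorems~\ref{cc1} and~\ref{cc15} to conclude finiteness from local finiteness plus finite rank. The only cosmetic difference is that the paper argues by contradiction---assuming some $\delta_k$-value has infinite order and deriving a contradiction after passing to the subgroup generated by the witnesses---whereas you prove directly that every $\delta_k$-value has finite order; the content is identical.
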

\begin{proof} We use induction on $k$. Of course the result holds when $k=0$ because infinite profinite groups are uncountable. 

By assumption, all $\delta_k$-values are covered by countably many procyclic subgroups. Thus, by Theorem \ref{cc15}, $G^{(k)}$ has finite rank. If every $\delta_k$-value has finite order, then by  Theorem \ref{cc1}, $G^{(k)}$ is locally finite, and hence finite. So we now assume that there exist elements $a_1,\ldots,a_{2^k}$ such that $\delta_k(a_1,\ldots,a_{2^k})$ has infinite order. Without loss of generality we can assume that $G=\langle a_1,\ldots,a_{2^k} \rangle$. 

As $G$ is finitely generated, by Lemma \ref{boundedly-many}, every $\delta_{k-1}$-value of $H=G'$ is a product of boundedly many $\delta_{k}$-values in elements of $G$. Therefore $H$ has only countably many $\delta_{k-1}$-values and by induction $H^{(k-1)}$ is finite. In particular $\delta_k(a_1,\ldots,a_{2^k})$ has finite order, a contradiction. 
\end{proof}

Conjecture \ref{A} seems rather hard to deal with. Even the case of the word $w=x^p$, where $p$ is an odd prime, is unclear. Perhaps the next conjecture should be more tractable.

\begin{conjecture}\label{B} Assume that the word $w$ has only countably many values in a profinite group $G$. Then $G$ satisfies a nontrivial group identity.
\end{conjecture}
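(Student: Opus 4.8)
The plan is to reduce the statement, via a Baire-category argument, to the assertion that a profinite group satisfying a suitable \emph{coset identity} possesses an open subgroup satisfying a genuine law; proving that reduction target is where the real difficulty lies.

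\emph{Reduction to an open subgroup with a law.} It suffices to produce an open subgroup $H\le G$ satisfying a nontrivial law. Indeed, replacing $H$ by its normal core (which is again open and, being contained in $H$, still satisfies the same law) we may assume $H\trianglelefteq G$; then, with $N=[G:H]$, we have $g^{N}\in H$ for every $g\in G$, so if $H$ satisfies $v(y_1,\dots,y_k)=1$ then $G$ satisfies the nontrivial law $v(x_1^{N},\dots,x_k^{N})=1$. (If $G$ happens to be finite it satisfies $x^{\exp G}=1$ outright.)

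\emph{Baire category and the coset identity.} Let $w=w(x_1,\dots,x_n)$ and let $G_w$ be its value set, countable by hypothesis. Since the evaluation map $w\colon G^{(n)}\to G$ is continuous,
\[
G^{(n)}=\bigcup_{t\in G_w}w^{-1}(t)
\]
is a countable union of closed subsets of the compact Hausdorff space $G^{(n)}$, so by Baire's theorem some fibre $w^{-1}(t_0)$ has nonempty interior and therefore contains a product $a_1M\times\cdots\times a_nM$ for suitable $a_i\in G$ and an open normal subgroup $M\trianglelefteq G$. Hence $w(a_1m_1,\dots,a_nm_n)=t_0$ for all $m_1,\dots,m_n\in M$. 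Putting all $m_i=1$ gives $t_0=w(a_1,\dots,a_n)$; in particular, if $M=G$ this reads $w(g_1,\dots,g_n)=w(1,\dots,1)=1$ identically, and $G$ itself satisfies the law $w=1$. We may thus assume $M\ne G$, and, disposing of one degenerate possibility, that $w$ is not of the form $u\,x_i^{\pm1}\,u'$ with $x_i$ absent from $u$ and $u'$ --- for in that case every element of $G$ is already a $w$-value (keep the other arguments equal to $1$), forcing $G$ countable, hence finite.

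\emph{From the coset identity to a law (the crux).} It remains to deduce from the coset identity $w(a_1m_1,\dots,a_nm_n)=t_0$ ($m_i\in M$ arbitrary) that some open subgroup of $G$ satisfies a nontrivial law. A convenient reformulation: substituting $m_i=x_i^{N}$ with $N=[G:M]$, so that $x_i^{N}\in M$ for every $x_i\in G$, one sees that $G$ satisfies the nontrivial \emph{generalized} identity $w(a_1x_1^{N},\dots,a_nx_n^{N})\,t_0^{-1}=1$, and the point is to pass from this to an ordinary law on an open subgroup. This is the main obstacle. In the $p$-adic analytic case there is nothing to do --- Conjecture \ref{A} itself holds there by \cite[Theorem 1.3]{Jaik} --- and in general one would want the statement that a profinite group satisfying a coset identity has an open subgroup satisfying a law. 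The natural attempt would be to imitate the way coset identities propagate in analytic and pro-$p$ groups: work in the finite continuous quotients $G/U$, where the coset identity persists relative to $MU/U$, and try to bound, \emph{uniformly in} $U$, the derived length (or the exponent) of a subgroup of bounded index --- most plausibly via the Lie-theoretic machinery of Zelmanov already exploited in this survey (cf. the proof of Theorem \ref{cc4}). A uniform bound of this kind would pass to the inverse limit and produce the required law. Obtaining that uniform control is presumably the genuinely hard point, and for an arbitrary word $w$ it appears to lie beyond present techniques. It is perhaps worth noting that when $w$ is a multilinear commutator word there is a shortcut: the countably many procyclic (hence soluble) subgroups $\langle t\rangle$, $t\in G_w$, already cover $G_w$, so Theorem \ref{cc3} forces $G$ to be virtually soluble and a law follows immediately --- so the real content of Conjecture \ref{B} is carried by words such as $w=x^{n}$.
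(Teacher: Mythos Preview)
Since the statement is a conjecture, neither you nor the paper proves it in full; what can be compared is the partial progress. Your Baire-category reduction to a coset identity is exactly the paper's own ``small step'': the paper likewise finds an open $H$ and $a_1,\dots,a_n$ with $w(a_1H,\dots,a_nH)=\{u_k\}$, and records that $v(x_1,\dots,x_{2n})=w(x_1,\dots,x_n)\,w(x_{n+1},\dots,x_{2n})^{-1}$ is then a coset identity of $G$. Your identification of the crux --- upgrading a coset identity to a genuine law on an open subgroup --- is also precisely where the paper leaves the general problem open.

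Where you and the paper diverge is in which special cases get settled. Your observation that the multilinear case follows immediately from Theorem~\ref{cc3} (the countably many procyclic $\langle t\rangle$, $t\in G_w$, are soluble and cover $G_w$) is correct and is not spelled out in the paper's discussion of Conjecture~\ref{B}. The paper, on the other hand, goes further and actually proves Conjecture~\ref{B} for $w=x^p$ with $p$ prime: after the Baire step one has $(ah)^p=u_k$ for all $h\in H$; since $C_G(u_k)$ is open one may factor out $\langle u_k^G\rangle$ and assume $u_k=1$; then either $a\in H$ and $H$ has exponent~$p$, or conjugation by $a$ is a splitting automorphism of order~$p$ of $H$, whence $H$ is pronilpotent by Kegel and locally nilpotent of $p$-bounded class by Khukhro, giving the required law. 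You flag $w=x^n$ as carrying the real content of the conjecture but do not attempt even the prime-exponent case; including that argument would bring your proposal up to the paper's level of concrete progress.
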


As a small step that eventually could be helpful in handling Conjectures \ref{A} and \ref{B} we will now show that under the above hypotheses the group $G$ satisfies a coset identity. Recall that a nontrivial word $w=w(x_1,\dots,x_n)$ is said to be a coset identity of a profinite group $G$ if there exists an open subgroup $H\leq G$ and cosets $g_1H,\dots,g_nH$ such that $w(g_1H,\dots,g_nH)=\{1\}$.

Thus, suppose that the word $w=w(x_1,\dots,x_n)$ has only countably many values $u_1,u_2,\dots$ in a profinite group $G$. For each $i$ set $$S_i=\{(g_1,\dots,g_n)\in G\times\dots\times G;\ w(g_1,\dots,g_n)=u_i\}.$$ The sets $S_i$ are closed in $G\times\dots\times G$ and cover $G\times\dots\times G$. Therefore, by Baire category theorem at least one of the sets $S_i$ has non-empty  interior. It follows that for some $k$ the group $G$ contains an open subgroup $H$ and elements $a_1,\dots,a_n$ such that $w(a_1H,\dots,a_nH)=\{u_k\}$. Set $$v(x_1,\dots,x_{2n})=w(x_1,\dots,x_n)w(x_{n+1},\dots,x_{2n})^{-1}.$$ It follows that the word $v(x_1,\dots,x_{2n})$ is a coset identity in $G$.

We conclude this article by showing that Conjecture \ref{B} is true when $w=x^p$, where $p$ is a prime. Indeed, we argue as above and conclude that for some $k$ the group $G$ contains an open normal subgroup $H$ and an element $a$ such that $(ah)^p=u_k$ for all $h\in H$. Let $N=\langle {u_k}^G\rangle$ be the minimal normal subgroup of $G$ containing $u_k$. By \cite[Lemma 3.1]{DMS2}, $C_G(u_k)$ is open in $G$ and therefore $N$ is central-by-finite. Hence, it is sufficient to show that $G/N$ satisfies a nontrivial identity and so we assume that $u_k=1$. 
 If $a\in H$, then $H$ is of exponent $p$ and the result is immediate. Therefore we assume that $a\not\in H$.
It follows that $H$ admits a split automorphism of prime order $p$. Kegel proved that finite groups admitting such an automorphism are nilpotent \cite{Kegel}. It follows that $H$ is pronilpotent. If $x,y\in H$, we can find an $a$-invariant subgroup $K\leq H$ containing both $x$ and $y$ and generated by at most 2$p$ elements. Khukhro's theorem \cite{khukhro} (see also \cite[Theorem 7.2.1]{bookhukhro}) tells us that $K$ is nilpotent with class bounded by a function of $p$ only. Of course, this is sufficient to conclude that $H$ satisfies a nontrivial identity and so does $G$.

\frenchspacing


\begin{thebibliography}{99}
\bibitem{AS1} C. Acciarri -- P. Shumyatsky, \textit{On profinite groups in which commutators are covered by finitely many subgroups}, Math. Z., {\bf 274} (2013), pp. 239--248.
%
\bibitem{AS2} C. Acciarri -- P. Shumyatsky, \textit{On verbal subgroups in finite and profinite groups}, Algebra Discrete Math., {\bf14} (2012), pp. 1--13.
%
\bibitem{AS3} C. Acciarri -- P. Shumyatsky, \textit{On finite groups in which coprime commutators are covered by few cyclic subgroups}, J. Algebra, {\bf 407} (2014),  pp. 358--371.
%
\bibitem{AS5} C. Acciarri -- P. Shumyatsky, \textit{On words that are concise in residually finite groups}, J. Pure Appl. Algebra, \textbf{218} (2014), pp. 130--134.
%
\bibitem{AS4} C. Acciarri -- P. Shumyatsky, \textit{Commutators and commutator subgroups in profinite groups}, arXiv:1408.5034v2.
%
\bibitem{CN} G. Cutolo -- C. Nicotera, \textit{Verbal sets and cyclic coverings}, J. Algebra, \textbf{324} (2010), pp. 1616--1624.
%
\bibitem{DMS} E. Detomi -- M. Morigi -- P. Shumyatsky, \textit{On countable coverings of word values in profinite groups}, J. Pure Appl. Algebra, \textbf{219} (2015),  pp. 1020--1030.
%
\bibitem{DMS2} E. Detomi -- M. Morigi -- P. Shumyatsky, \textit{On conciseness of words in profinite groups}, arXiv:1507.03362.
%
\bibitem{FAS}
G.\,A. Fern\'andez-Alcober -- P. Shumyatsky, \textit{On groups in which commutators are covered by finitely many cyclic subgroups}, J.\ Algebra, \textbf{319} (2008),  pp. 4844--4851.
%
\bibitem{FAM} G.\,A. Fern\'andez-Alcober -- M. Morigi, \textit{Outer commutator words are uniformly concise}, J. Lond. Math. Soc., \textbf{82} (2010),  pp. 581--595.
%
\bibitem{FAMS} G.\,A. Fern\'andez-Alcober -- M. Morigi --  P. Shumyatsky, \textit{Procyclic coverings of commutators in profinite groups}, Arch.  Math., \textbf{103} (2014), pp. 101--109.
%
\bibitem{gur} R. Guralnick, \textit{On the number of generators of a finite group}, Arch. Math., \textbf{53} (1989), pp. 521--523.
%
\bibitem{GS} R. Guralnick -- P. Shumyatsky, \textit{On rational and concise words}, J. Algebra, \textbf{429} (2015), pp. 213--217.
%
\bibitem{Hall} P. Hall, \textit{Finite-by-nilpotent groups}, Proc. Cambridge Philos. Soc., \textbf{52} (1956),  pp. 611--616.
%
\bibitem{her} W. Herfort, \textit{Compact torsion groups and finite exponent}, Arch. Math. N., \textbf{33} (1979), pp. 404--410.
%
\bibitem{Ivan} S.\,V. Ivanov, \textit{P. Hall's conjecture on the finiteness of verbal subgroups}, Izv. Vyssh. Ucheb. Zaved., \textbf{325}(1989), pp. 60--70.
%
\bibitem{Jaik} A. Jaikin-Zapirain, \textit{On the verbal width of finitely generated pro-$p$ groups}, Rev. Mat. Iberoamericana, \textbf{168} (2008),  pp. 393--412.
%
\bibitem{Kegel} O.\,H. Kegel, \textit{Die Nilpotenz der $H_p$-Gruppen},  Math. Z., {\bf75} (1960), pp. 373--376.
%
\bibitem{kell}J.\,L. Kelley, \textit{General Topology}, Van Nostrand, Toronto - New York - London, 1955.
%
\bibitem{khukhro}E.\,I. Khukhro, \textit{Locally nilpotent groups  admitting a splitting automorphism of prime order}, Mat. Sb., \textbf{130} (1986), pp. 120--127 (Russian); English transl. Math. USSR-Sb,  \textbf{58} (1987), pp. 119-126.
%
\bibitem{bookhukhro} E.\,I. Khukhro, \textit{Nilpotent Groups and their Automorphisms}, de Gruyter, Berlin, 1993.
%
\bibitem{khushu14} E.\,I. Khukhro -- P. Shumyatsky, \textit{Words and pronilpotent subgroups in profinite groups}, J. Austral. Math. Soc., \textbf{97} (2014),  pp. 343--364.
%
\bibitem{waring}    M. Larsen -- A. Shalev -- P.\,H. Tiep, \textit{'Waring problem for finite simple groups'}, Ann. of Math., (2) \textbf{174} (2011),  pp. 1885--1950.
%
\bibitem{LM}A. Lubotzky --  A. Mann, {\it Powerful $p$-groups. I: finite groups},  J. Algebra, {\bf105} (1987), pp. 484--505.
%
\bibitem{LM2} A. Lubotzky --  A. Mann, {\it Powerful $p$-groups. II: p-adic analytic groups}, J. Algebra, {\bf105}, (1987), pp. 506--515.
%
\bibitem{lucchi} A. Lucchini, \textit{A bound on the number of generators of a finite group}, Arch. Math., \textbf{53} (1989),  pp. 313--317.
%
\bibitem{kouro} V.\,D. Mazurov (ed.) -- E.\,I. Khukhro (ed.), \textit{The Kourovka notebook. Unsolved problems in group theory}, 17th ed.,  Institute of Mathematics, Novosibirsk, 2010.
%
\bibitem{Merz}Ju.\,I. Merzlyakov, \textit{Verbal and marginal subgroups of linear groups}, Dokl. Akad. Nauk SSSR, \textbf{177} (1967), pp. 1008--1011.
%
\bibitem{neumann} B.\,H.\, Neumann, \textit{Groups covered by permutable subsets}, J. Lond. Math. Soc., \textbf{29} (1954),  pp. 236--248.
%
\bibitem{nisega} N. Nikolov -- D. Segal, \textit{On finitely generated profinite groups, I: Strong completeness and uniform bounds}, Ann. of Math., {\bf 165} (2007), pp. 171--238.
%
\bibitem{Ol}A.\,Yu. Ol'shanskii, \textit{Geometry of Defining Relations in Groups}, Mathematics and its applications 70 (Soviet Series), Kluwer Academic Publishers, Dordrecht, 1991.
%
\bibitem{riza} L. Ribes -- P. Zalesskii, \textit{Profinite Groups}, 2nd Edition, Springer Verlag, Berlin -- New York (2010). 
%
\bibitem{Robinson}  D.\,J.\,S. Robinson, \textit{Finiteness conditions and generalized soluble groups}, Part 1,
Springer-Verlag, New York-Berlin, 1972.
%
\bibitem{segalbook} D. Segal, \textit{Words: notes on verbal width in groups}, LMS Lecture Notes \textbf{361}, Cambridge Univ. Press, Cambridge, 2009.
%
\bibitem{Sh1}P. Shumyatsky, \textit{Commutators of elements of coprime orders in finite groups}, Forum Math., \textbf{27} (2015),  pp. 575--583.
%
\bibitem{Sh2}P. Shumyatsky, \textit{On profinite groups with commutators covered by nilpotent subgroups}, to appear in {\it Revista Matem\'atica Iberoamericana}, arXiv:1501.02734.
%
\bibitem{Sh3}P. Shumyatsky, \textit{On profinite groups in which commutators are Engel}, J. Austral. Math. Soc.,  \textbf{70} (2001), pp. 1--9.
%
\bibitem{Sh4}P. Shumyatsky, \textit{Verbal subgroups in residually finite groups}, Q. J. Math., \textbf{51} (2000), pp. 523--528.
%
\bibitem{Tom}M.\,J. Tomkinson, \textit{Hypercentre-by-finite groups}, Publ. Math. Debrecen, \textbf{ 40} (1992), pp. 313--321.
%
\bibitem{TS} R.\,F. Turner-Smith, \textit{Finiteness conditions for verbal subgroups}, J. London Math. Soc., \textbf{41} (1966), pp. 166--176.
%
\bibitem{Wil} J.\,S. Wilson, \textit{On the structure of compact torsion groups}, Monatsh. Math., {\bf96} (1983), pp. 57--66.
%
\bibitem{wils} J.\,S.\ Wilson, \textit{Profinite Groups\/}, Clarendon Press, Oxford, 1998.
%
\bibitem{wilson} J. Wilson, \textit{On outer-commutator words}, Can. J. Math., \textbf{26} (1974), pp. 608--620.
%
\bibitem{ze1} E.\,I. Zelmanov,  \textit{Solution of the restricted Burnside problem for groups of odd exponent}, Math. USSR-Izv., {\bf 36} (1991), no. 1,  pp. 41--60.
%
\bibitem{ze2} E.\,I. Zelmanov, \textit{Solution of the restricted Burnside problem for $2$-groups}, Math. USSR-Sb., {\bf 72} (1992), no. 2,  pp. 543--565.
%
\bibitem{Zel} E.\,I. Zelmanov, \textit{On periodic compact groups}, Israel J. Math., {\bf77} (1992), pp. 83--95.

\end{thebibliography}
\end{document}